\newcommand{\A}{\mathcal{A}}
\newcommand{\B}{\mathcal{B}}
\newcommand{\BB}{\mathbb{B}}
\newcommand{\CC}{\mathbb{C}}
\newcommand{\DD}{\mathbb{D}}
\newcommand{\GL}{\mathrm{GL}}
\newcommand{\Hi}{\mathbf{H}}
\newcommand{\HH}{\mathbb{H}}
\newcommand{\ii}{\mathbf{i}}
\newcommand{\jj}{\mathbf{j}}
\newcommand{\M}{\mathcal{M}}
\newcommand{\NN}{\mathbb{N}}
\newcommand{\Ord}{\mathcal{O}}
\newcommand{\OrdI}{\mathcal{O}_\mathfrak{I}}
\newcommand{\OrdO}{\Ord_\mathfrak{O}}
\newcommand{\PSL}{\mathrm{PSL}}
\newcommand{\PSLBG}{\mathrm{PSL}_2^{\mathrm{BG}}}
\newcommand{\QQ}{\mathbb{Q}}
\newcommand{\q}{\mathbf{q}}
\newcommand{\RR}{\mathbb{R}}
\newcommand{\SL}{\mathrm{SL}}
\newcommand{\T}{\mathcal{T}}
\newcommand{\uu}{\mathbf{u}}
\newcommand{\U}{\mathcal{U}}
\newcommand{\ZZ}{\mathbb{Z}}
\DeclareMathOperator{\Hur}{Hur}
\DeclareMathOperator{\Mat}{M}
\DeclareMathOperator{\nrd}{nrd}
\DeclareMathOperator{\Stab}{Stab}
\DeclareMathOperator{\SO}{SO}
\DeclareMathOperator{\trd}{trd}
\titleformat{\section}[hang]
{\normalfont\filright\large}{\thesection . }{0pt}
{\upshape\bfseries}
\titleformat{\subsection}[hang]
{\itshape}{\thesubsection \ - }{0pt}
{}
\newtheorem{theorem}{Theorem}[section]
\newtheorem{proposition}[theorem]{Proposition}
\newtheorem{lemma}[theorem]{Lemma}
\theoremstyle{definition}
\newtheorem{definition}{Definition}[section]
\theoremstyle{remark}
\newtheorem{remark}{Remark}
\newtheorem{example}{Example}
\title{ Bianchi and Hilbert-Blumenthal quaternionic orbifolds}
\author{\small ALBERTO VERJOVSKY\footnote{Instituto de Matemáticas, Unidad Cuernavaca, Universidad Nacional Autóoma de México.  Av. Universidad s/n. Col. Lomas de Chamilpa, CP 62210, Cuernavaca, Mexico. \emph{E-mail address: }\texttt{alberto@matcuer.unam.mx}}  \; \&
ADRI$\acute{\mbox{A}}$N ZENTENO \footnote{Centro de Investigación en Matemáticas, A.C. Jalisco s/n. Col. Valenciana, CP 36023, Guanajuato, Mexico. \emph{E-mail address: }\texttt{adrian.zenteno@cimat.mx}}}
\date{\today}
\begin{document}

\maketitle

\begin{abstract}

In a series of papers, published in {\it Mathematische Annalen}, Bianchi and Blumenthal introduced the notions of Bianchi orbifolds and Hilbert-Blumnethal surfaces as generalizations of modular curves associated to quadratic fields. In this paper, in the same spirit, and following a similar line of reasoning, we introduce the concept of Bianchi and Hilbert-Blumenthal quaternionic orbifolds as generalizations of the Lipschitz and Hurwitz quaternionic modular orbifolds defined recently by Díaz, Vlacci and the first author. 
In particular, we describe the cusp cross-sections of the Hilbert-Blumenthal quaternionic orbifolds in terms of fundamental units of real quadratic fields. These are 7-dimensional solvmanifolds which are virtual ${\mathbb T}^6$ bundles over the circle with monodromy a linear Anosov diffeomorphism of the 6-torus.

2020 \textit{Mathematics Subject Classification}. 11R52, 20H10, 11F06.
\end{abstract}

\section{Introduction}\label{sec:intro}

The classical modular curve is defined as the quotient $\PSL_2(\ZZ) \backslash \Hi^1_{\CC}$ of the hyperbolic complex upper-half plane $\Hi^{1}_{\CC}$ by the action of the modular group $\PSL_2(\ZZ)$. 
A way  to get the modular group using quaternion algebras is as follows: First, we start with the indefinite quaternion algebra $\big(\frac{1,1}{\QQ}\big)$, which is isomorphic to $\mbox{M}_2(\QQ)$. Then, we look for the integral elements inside $\mbox{M}_2(\QQ)$, to get the $\ZZ$-order $\mbox{M}_2(\ZZ)$, and take its units group $\GL_2(\ZZ)$. Finally, we consider the index two subgroup $\GL^+_2(\ZZ) = \SL_2(\ZZ)$ of elements of $\GL_2(\ZZ)$ with determinant 1 and projectivize it, obtaining the Fuchsian group $\PSL_2(\ZZ) \subseteq \PSL_2(\RR)$ which acts faithfully on $\Hi^{1}_{\CC}$ by orientation-preserving isometries. In a similar way, the Bianchi (resp. Hilbert-Blumenthal) modular groups used to construct the corresponding Bianchi orbifolds (resp. Hilbert-Blumnethal surfaces), can be obtained by considering indefinite quaternion algebras of the form $\big(\frac{1,1}{K}\big)$ where $K$ is an imaginary (resp. real) quadratic field. These groups are Kleinian (resp. 2-fold Fuchsian) groups acting on the hyperbolic real 3-space $\Hi^{3}_{\RR}$ (resp. $\Hi^{2}_{\CC} := \Hi^{1}_{\CC} \times \Hi^{1}_{\CC}$) by orientation-preserving isometries. See \cite[Chapter 38]{Voight2021} for details.

On the other hand, by looking at the definite quaternion algebra $\big(\frac{-1,-1}{\QQ}\big)$, Díaz, Vlacci and the first author \cite{DiazVerjovskyVlacci2015} have recently introduced the Lipschitz and Hurwitz modular groups $\Gamma \subseteq \PSL_2(\HH)$ as generalizations of the classical modular group. 
These new quaternionic modular groups are characterized by certain conditions introduced by Bisi and Gentili in \cite{BisiGentili2008}, which allow us to obtain a proper and discontinuous action on the hyperbolic quaternionic half space $\Hi^1_{\HH}:=\{ \q \in \HH: \Re(z)>0 \}$ and define the quaternionic orbifolds $\Gamma \backslash \Hi^1_{\HH}$. 
We remark that Bisi-Gentili conditions  are a variation of certain conditions described by Ahlfors in \cite{Ahlfors1985}. See \cite{Sheydvasser2021} for the definition of orbifolds using Ahlfors conditions. 

In this paper, following the line of reasoning of Bianchi \cite{Bianchi1891} \cite{Bianchi1892} and Blumenthal \cite{Blumenthal1903} \cite{Blumenthal1904}, we define Bianchi and Hilbert-Blumenthal quaternionic modular groups as generalizations of Lipschitz and Hurwitz quaternionic modular group. On the one hand, considering a $\ZZ$-order $\Ord$ in the definite quaternion algebra $\big(\frac{-1,-1}{\QQ}\big)$, we define the Bianchi quaternionic modular group $\PSL_2(\Ord)$ which acts conformally on the Hamilton's quaternions $\HH$. This action can be extended to an action on the real hyperbolic 5-space $\Hi^5_{\RR}$ by Poincaré Extension Theorem as in \cite{DiazVerjovskyVlacci2015}\footnote{An extended version available in arXiv:1503.07214}, which allows us to define the Bianchi quaternionic orbifold 
\[
M_\Ord := \PSL_2(\Ord) \backslash \Hi^5_{\RR}.
\] 
On the other hand, let $K$ be a quadratic real field with ring of integers $\ZZ_K$. For every $\ZZ_K$-order $\Ord$ in the totally definite quaternion algebra $\big(\frac{-1,-1}{K}\big)$, we define the Hilbert-Blumenthal quaternionic modular group $\Gamma(\Ord) \subseteq \PSLBG(\Ord)$ associated to $\Ord$ which acts on the product $\Hi^{2}_{\HH} := \Hi^{1}_{\HH} \times \Hi^{1}_{\HH}$ of two hyperbolic quaternionic half spaces, producing the Hilbert-Blumenthal quaternionic orbifold 
\[
M_{\Gamma(\Ord)} : = \Gamma(\Ord) \backslash \Hi^2_{\HH}.
\]

Finally, it is well-known that a cusp cross-section of a Hilbert-Blumenthal surface is the 3-dimensional mapping torus of some Anosov diffeomorphism of the torus  \cite{Hirzebruch1973}. At the end of this paper, we give a description of the cusp at $\infty$ of the Hilbert-Blumenthal quaternionic orbifold $M_{\Gamma (\Ord)}$. In particular, we prove that a cross-section of the cusp at $\infty$ is a virtual $(6,1)$-torus bundle described in terms of the fundamental unit $\varepsilon$ of $\ZZ_K$. 

\subsection*{Acknowledgments} The first author was supported by grant  IN108120, PAPIIT, DGAPA, Universidad Nacional Aut\'onoma de M\'exico. The second author was supported by the CONACYT grant 432521, Estancias Posdoctorales por México 2021 - Modalidad Académica.


\section{Preliminaries on quaternionic Möbius transformations}\label{sec:2}

In this section, we will recall some definitions concerning quaternion algebras and review some classical facts about quaternionic Möbius transformation which we will use throughout this paper.  Our main references are \cite{Ahlfors1982}, \cite{Ahlfors1985}, \cite{BisiGentili2008}, \cite{DiazVerjovskyVlacci2015}, \cite{Vigneras1980} and \cite{Voight2021}. We refer the reader to loc. cit. for more details and references. 


\subsection{General definitions}\label{sec:21}

\begin{definition}
Let $K$ be a field of characteristic 0.  A $K$-algebra $\B$ is a \emph{quaternion algebra} over $K$, if there exist $\ii, \jj \in \B$ such that $\{ 1, \ii, \jj, \ii \jj \}$ is a $K$-basis for $\B$ and 
\[
\ii^2=a, \; \jj ^2=b \; \mbox{and}\; \ii \jj=- \jj \ii, 
\]
for some $a,b\in K^\times$. Such an algebra is usually denoted by $\big(\frac{a,b}{K}\big)$. 
\end{definition}

In particular, when $K=\RR$ and $a=b=-1$ (then $\ii=\sqrt{-1} = i$, $\jj=\sqrt{-1} = j$ and $\ii \jj=ij = k$), the quaternion algebra $\big(\frac{-1,-1}{\RR}\big)$ is the classical algebra of \emph{Hamilton's quaternions}, which is usually denoted by $\HH$. More generally, if $\B = \big(\frac{a,b}{\RR}\big)$ is a quaternion algebra over $\RR$, then $\B \simeq \mbox{M}_2(\RR)$ or $\B \simeq \HH$ (this last case occurs if and only if $a,b<0$). On the other hand, when $K= \CC$, we have that $\B = \big(\frac{a,b}{\CC}\big) \simeq \mbox{M}_2(\CC)$ for all $a,b \in \CC^\times$.

Inspired by the complex conjugation we can define a \emph{standard involution} on $\B$ given by the map 
\[
\overline{\phantom{q}} : \B \longrightarrow \B
\]
\[
\q = x_0+x_1 \ii +x_2 \jj +x_3 \ii \jj  \longmapsto \overline{\q} = x_0 - x_1 \ii - x_2 \jj - x_3 \ii \jj.
\]
The existence of such involution allows us to define a  \emph{reduced trace} $\trd: \B \rightarrow K$ by $\trd(\q) := \q+\overline {\q}$ and a \emph{reduced norm} $\nrd: \B \rightarrow K$ by $\nrd(\q) := \q \overline \q$. Then, we can define $\Re(\q) := \frac{\trd(\q)}{2}$ and $\vert \q \vert := \nrd(\q)$. 
Of particular interest to us will be the $K$-subspace of \emph{pure} elements of $\B$ defined as
\[
\B^0 := \{ \q \in \B :  \trd(\q) = 0 \}
\]
and the normal subgroup 
\[
\B^1 := \{ \q \in \B^\times : \nrd(\q) = 1 \}
\]
of $\B^\times$ of elements of reduced norm 1.

When $\B \simeq \HH$, we have that $\HH^0$ is the subspace of classical \emph{pure Hamiltonians} and  $\HH^1$ is the classical subgroup of \emph{unit Hamiltonians}. 
As a set, the unit Hamiltonians are naturally identified with the 3-sphere $\mathbb{S}^3$ in $\RR^4$. The group $\HH^1$ acts  by rotation on $\HH^0 \simeq \RR^3$ (on the left) via conjugation $\omega  \mapsto \q \omega \q^{-1}$. This action defines a group homomorphism $\HH^1 \rightarrow \SO(3)$, fitting into the following exact sequence
\[
1 \longrightarrow \{ \pm 1 \} \longrightarrow \HH^1 \longrightarrow \SO(3) \longrightarrow 1 .
\]
The interpretation of the quotient group $\HH^1 / \{ \pm 1 \}$ as the group of rotations of $\RR^3$ is very useful to determine certain algebraic substructures in $\HH^1$. For example, from the classification of finite groups of $\SO(3)$  \cite[Proposition 11.5.2]{Voight2021}, we have that each finite subgroup of $\HH^1$ is isomorphic to one of the following groups \cite[Ch. I, Théorème 3.7]{Vigneras1980}:
\begin{enumerate}
\item a cyclic group $C_n$ of order $n$ generated by $s_n = \cos (2\pi /n) + i \sin (2 \pi /n)$;
\item a \emph{binary dihedral (dicyclic)} group $Q_{4n}$ of order $4n$ generated by $s_{2n}$ and $j$;
\item the \emph{binary tetrahedral} group $2T$ of order 24 with presentation given by
\[
\langle r,s,t \mid r^{2}=s^{3}=t^{3}=rst=1 \rangle 
\]
where $r= i$, $s= \frac{1}{2} (1+i+j+k)$ and $t= \frac{1}{2}(1+i+j-k)$;
\item the \emph{binary octahedral} group $2O$ of order 48 with presentation given by
\[
\langle r,s,t \mid r^{2}=s^{3}=t^{4}=rst=1 \rangle 
\]
where $r= \frac{1}{\sqrt{2}}(i+j)$, $s= \frac{1}{2} (1+i+j+k)$ and $t= \frac{1}{\sqrt{2}}(1+i)$; or
\item the \emph{binary icosahedral} group $2I$ of order 120 with presentation given by
\[
\langle s,t \mid (st)^{2}=s^{3}=t^{5}=rst=1 \rangle 
\]
where $s= \frac{1}{2} (1+i+j+k)$, $t= \frac{1}{2}(\varphi+\varphi^{-1}i + j)$ and $\varphi = \frac{1+\sqrt{5}}{2}$ is the golden ratio.
\end{enumerate}


\subsection{Möbius transformations}\label{Sec:22} 

Let $\Mat_2(\HH)$ be the $\HH$-module (right or left, according to the setting) of $2 \times 2$ matrices with entries in $\HH$. It can be proven that all right-invertible matrices in $\Mat_2(\HH)$ are also left-invertible \cite[Proposition 2.3]{BisiGentili2008}. Then, we can define the \emph{general linear group} $\GL_2(\HH)$ as the set of all invertible matrices of $\Mat_2(\HH)$.

\begin{definition}
Let $\hat{\HH} := \HH \cup \{ \infty \}$ be the compactification of $\HH$ and  
 $\gamma = \left(
\begin{matrix}
a &  b\\
c & d
\end{matrix}
\right) \in \GL_2(\HH)$.
We define the \emph{quaternionic Möbius transformation} associated to $\gamma$ as the real analytic function $F_\gamma : \hat{\HH} \rightarrow  \hat{\HH}$ given by 
\begin{equation}\label{mobi}
F_\gamma (\q) := (a \q +b) \cdot (c \q + d)^{-1},
\end{equation}
where we set $F_\gamma(\infty) = \infty$ if $c = 0$, $F_\gamma(\infty) = ac^{-1}$ if $c \neq 0$, and  $F_\gamma (-c^{-1}d) = \infty$.  
\end{definition}

As in the complex case, every quaternionic Möbius transformation is a composition of homotheties, translations, and inversions. More precisely, let $b,c \in \HH$, with $c\neq0$. We define the \emph{left homothetic transformation} $h_c: \HH \rightarrow \HH$ as the map $\q \mapsto c\q$, the \emph{translation} $T_b : \HH \rightarrow \HH$ as the map $\q \mapsto \q+b$, and the \emph{inversion} $I : \HH \rightarrow \HH$ as the map $\q \mapsto \q^{-1} = \frac{\overline{\q}}{ \vert \q \vert ^2}$. Then, $F_\gamma(\mathbf{q})$ can be decomposed as follows:
\[
\mathbf{q}\xrightarrow{T_{c^{-1}d}}(\mathbf{q}+c^{-1}d)
\xrightarrow{h_c}{c\mathbf{q}+d}
\xrightarrow{I}(c\mathbf{q}+d)^{-1}\xrightarrow{h_{b-ac^{-1}d}}(b-ac^{-1}d)(c\mathbf{q}+d)^{-1} \xrightarrow{T_{ac^{-1}}} (b-ac^{-1}d)(c\mathbf{q}+d)^{-1}+ac^{-1}
\]
\[
=(b-ac^{-1}d)(c\mathbf{q}+d)^{-1}+ac^{-1}(c\mathbf{q}+d)(c\mathbf{q}+d)^{-1}=(a\mathbf{q}+b)(c\mathbf{q}+d)^{-1} .
\]
Therefore 
\begin{equation}\label{gene}
F_\gamma=T_{ac^{-1}}\circ{h_{b-ac^{-1}d}}\circ{I}\circ{h_c}\circ{T_{c^{-1}d}}.
\end{equation}

Let $\M := \{ F_\gamma : \gamma \in \GL_2(\HH) \}$ be the group of quaternionic Möbius transformations with respect to the composition operation. The map $\Phi: \GL_2(\HH) \rightarrow \M$, defined as $\Phi(\gamma) := F_{\gamma}$, is a surjective group antihomomorphism with $\ker(\Phi) := \{ t \mathcal{I} : t \in \RR^\times \}$, where $\mathcal{I} \in \GL_2(\HH)$ is the identity matrix. As the quaternionic Möbius transformations are orientation-preserving conformal diffeomorphisms of $\hat{\HH}$, which can be identified with the 4-sphere $\mathbb{S}^4$ via the stereographic projection, then
\[
\PSL(2,\HH) := \GL_2(\HH)/\{t \mathcal{I} :  t \in \RR^\times \} \simeq Conf_+(\mathbb{S}^4),
\] 
where $Conf_+(\mathbb{S}^4)$ denotes the group of orientation-preserving conformal diffeomorphisms of the 4-sphere. 
By abuse of language, we will sometimes identify a matrix with quaternionic coefficients with the induced Möbius transformation and vice versa.

Now, let 
\[
\Hi_{\HH}^1 := \{ \q \in \HH : \Re(\q) >0 \} \subseteq \HH
\]
be the half-space model of the \emph{one-dimensional quaternionic hyperbolic space}.
This space is isometric to the real hyperbolic 4-space $\Hi_{\RR}^4 := \{ (x_0 , x_1 , x_2 , x_3) \in \RR^4 : x_0 >0 \}$ with the Poincare metric $(ds)^2 = \frac{(dx_0)^2+(dx_1)^2+(dx_2)^2+(dx_3)^2}{x_0^2}$. 

Let $\M_{\Hi_{\HH}^1} \subseteq \PSL_2(\HH)$ be the subgroup of quaternionic Möbius transformation leave invariant $\Hi_{\HH}^1$.
As any $F_\gamma \in \M_{\Hi_{\HH}^1}$ is conformal and preserves orientation, moreover is an isometry of $\Hi_{\HH}^1$, then $\M_{\Hi_{\HH}^1}$ is isomorphic to the groups $Conf_+(\Hi_{\HH}^1)$ and $Isom_+(\Hi_{\HH}^1)$ of conformal diffeomorphisms and isometries orientation-preserving of the half-space model $\Hi_{\HH}^1$. 
In addition, $\mathcal{M}_{\Hi_{\HH}^1}$ acts by orientation-preserving conformal transformations on the sphere at infinity of the hyperbolic 4-space defined as $\partial \Hi_{\HH}^1 := \{ \q \in \HH : \Re(\q) =0 \} \cup \{ \infty \}$. Then, $\M_{\Hi_{\HH}^1} \cong Conf_+(\mathbb{S}^3)$. 

We remark that $\M_{\Hi_{\HH}^1} \subseteq \PSL_2(\HH)$ can be characterized as the group induced by matrices which satisfy one of the following equivalent (Bisi-Gentili)\emph{BG-conditions} \cite[Theorem 1.11]{BisiGentili2008}  \cite[Proposition 2.3]{DiazVerjovskyVlacci2015} (which are a variation of the conditions described by Ahlfors \cite{Ahlfors1985} and  Vahlen \cite{Vahlen1902}):
\begin{equation}
\left\lbrace \gamma \in \PSL_2(\HH) 
: \overline{\gamma}^\top \left(
\begin{matrix}
0 &  1\\
1 & 0
\end{matrix}
\right) \gamma = \left(
\begin{matrix}
0 &  1\\
1 & 0
\end{matrix}
\right)  \right\rbrace ,
\end{equation}
\begin{equation}
\left\lbrace \left(
\begin{matrix}
a &  b\\
c & d
\end{matrix}
\right) \in \PSL_2(\HH) 
: \Re(a \overline{c}) =0, \; \Re(b \overline{d}) =0, \;  \overline{b}c + \overline{d}a =1  \right\rbrace ,
\end{equation}
\begin{equation}
 \left\lbrace \left(
\begin{matrix}
a &  b\\
c & d
\end{matrix}
\right) \in \PSL_2( \HH) 
: \Re(c \overline{d}) =0, \; \Re(a \overline{b}) =0, \;  a\overline{d} + b\overline{c} =1  \right\rbrace.
\end{equation}

An important subgroup of $\mathcal{M}_{\Hi_{\HH}^1}$ is the \emph{affine subgroup} $\A(\HH)$ consisting of transformations which are induced by matrices of the form $ \left(
\begin{matrix}
\lambda a &  b\\
0 & \lambda^{-1} a
\end{matrix}
\right)$
with $\vert a \vert = 1$, $\lambda > 0$ and $\Re(a\overline{b}) = 0$ (which clearly satisfy BG-conditions). The group $\A(\HH)$ is the maximal subgroup of $\M_{\Hi_{\HH}^1}$, which fixes the point at infinity, and any $F_\gamma \in \A(\HH)$ acts as a conformal transformation on $\partial \Hi_{\HH}^1$. Moreover, $\A(\HH)$ is the group of conformal and orientation-preserving transformations acting on the space of pure quaternions at infinity, which can be identified with $\RR^3$ so that $\A(\HH) \cong Conf_+(\RR^3)$.

A useful decomposition of the elements of $\M_{\Hi_{\HH}^1}$ is the \emph{Iwasawa decomposition} \cite[Proposition 2.4]{DiazVerjovskyVlacci2015}, which states that every $\gamma \in \mathcal{M}_{\Hi_{\HH}^1}$ can be written in the form
\begin{align}\label{iwasawa}
\gamma=\begin{pmatrix}
		\lambda & 0\\
		0 & \lambda^{-1}
	\end{pmatrix}
	\begin{pmatrix}
		1 & \omega \\
		0 & 1
	\end{pmatrix}
	\begin{pmatrix}
		\alpha & \beta\\
		\beta & \alpha
	\end{pmatrix}
\end{align}
where $\lambda\in\RR^+$, $\omega \in \HH^0$, and $\alpha,\beta\in\HH$ satisfy $\vert \alpha \vert^2 + \vert \beta \vert^2=1$ and $\Re(\alpha \overline{\beta}) = 0$. The first matrix is a homothety fixing $0$ and $\infty$, the second matrix is a parabolic translation fixing $\infty$ in the direction of $\omega$ and the third matrix is a 4-dimensional rotation. In fact, the set $\mathcal{K}$, of all matrices of the form 
$\begin{pmatrix}
		\alpha & \beta\\
		\beta & \alpha
	\end{pmatrix}$,
is isomorphic to the special orthogonal group $\SO(4)$ which is a real compact Lie group of dimension 6. Then, we can deduce that the set of all matrices of $\PSL_2(\HH)$ satisfying the BG-condition has real dimension 10. 
In particular, it is important to note that the action at infinity of the subset of diagonal matrices $\mathcal{D} :=  \left\lbrace \begin{pmatrix}
		\alpha & 0 \\
		0 & \alpha
	\end{pmatrix} \in  \mathcal{K} \right\rbrace$,
 is given by $\q \mapsto \alpha \q \overline{\alpha}$, which is the usual action of $\SO(3)$ in the space of pure Hamiltinians $\HH^0$. Then, $\mathcal{D}$ is isomorphic to $\SO(3)$.	


\section{Orders in totally definite quaternion algebras}\label{sec:3}

Let $K$ be a number field and $\ZZ_K$ denotes its ring of integers. For example, if $K=\QQ$ then $\ZZ_K$ is simply $\ZZ$ and if $K$ is a quadratic field (i.e., $K=\QQ(\sqrt{n})$ for some nontrivial square-free $n \in \ZZ$ different from 0 and 1) then $\ZZ_K = \ZZ[\theta]:= \ZZ \oplus \ZZ \theta$ where
 \[
\theta := \begin{cases} \sqrt{n} & \mbox{if } n \nequiv 1 \mod 4 \\ \frac{1+\sqrt{n}}{2} & \mbox{if } n \equiv 1 \mod 4 \end{cases} .
\]

\begin{definition}
A $\ZZ_K$-\emph{order} $\Ord$, in a quaternion algebra $\B$ over $K$, is a $4$-dimensional $\ZZ_K$-lattice in $\B$ that is also a ring with unity. Moreover, $\Ord$ is called \emph{maximal} if no other $\ZZ_K$-order properly contains it.
\end{definition}

\begin{remark}
Maximal $\ZZ_K$-orders are analogous to rings of integers of number fields but an important difference is that rings of integers are unique, while a quaternion algebra can have many maximal $\ZZ_K$-orders. For example, if $\Ord \subseteq \B$ is a maximal $\ZZ_K$-order and $\q \in \B^\times$, then $\q \Ord \q^{-1} \subseteq \B$ is a maximal $\ZZ_K$-order,  but as $\B$ is noncommutative, we may have $\q \Ord \q^{-1} \neq \Ord$.
\end{remark}

Let $a,b \in \ZZ_K \backslash \{ 0 \}$ and $\B = \big(\frac{a,b}{K}\big)$. The most natural example of a $\ZZ_K$-order is the \emph{standard order} in $\B$ 
\[
\Ord_\B := \ZZ_K \oplus \ZZ_K \ii \oplus \ZZ_K \jj \oplus \ZZ_K \ii \jj.
\] 
In particular, when $ \B = \B_K$, $\HH(\ZZ_K) := \Ord_{\B_K} = \ZZ_K \oplus \ZZ_K i \oplus \ZZ_K j \oplus \ZZ_K k$ is properly contained in the $\ZZ_K$-order 
\[
\Hur(\ZZ_K) := \ZZ_K \oplus \ZZ_K i \oplus \ZZ_K j \oplus \ZZ_K \xi,
\] 
where $\xi = \frac{1+i+j+k}{2}$, showing that $\HH(\ZZ_K)$ is never a maximal $\ZZ_K$-order in $\B_K$. These orders generalize the rings of Lipschiz and Hurwitz integers $\HH(\ZZ)$ and $\HH ur(\ZZ)$ studied in \cite{DiazVerjovskyVlacci2015}. However, in contrast to the maximality of $\HH ur(\ZZ)$ in $\B_{\QQ} = \big(\frac{-1,-1}{\QQ}\big)$, $\Hur(\ZZ_K)$ is not always maximal in $\B_K$. For example, if $K = \QQ(\sqrt{2})$ we can define the \emph{binary octahedral order} of $\B_{\QQ(\sqrt 2)}$ as
\[
\OrdO := \ZZ [\sqrt2] \oplus  \ZZ [\sqrt2] \eta \oplus \ZZ [\sqrt2] \delta \oplus \ZZ [\sqrt2] \eta\delta,
\]
where $\eta=\frac{1+i}{\sqrt 2}$ and $\delta =\frac{1+j}{\sqrt 2}$, which properly contains $\Hur(\ZZ[\sqrt{2}])$. 
Similarly, when $K= \QQ(\sqrt5)$, we can define the \emph{binary icosahedral order} of $\B_{\QQ(\sqrt 5)}$ as
\[
\OrdI := \ZZ [\varphi] \oplus  \ZZ [\varphi] i \oplus \ZZ [\varphi] \zeta \oplus \ZZ [\varphi] i \zeta,
\]
where $\varphi = \frac{1+\sqrt5}{2}$ is the golden ratio and $\zeta = \frac{\varphi + \varphi^{-1} i + j}{2}$, which properly contains $\Hur(\ZZ[\varphi])$. In fact, $\OrdO$ and $\OrdI$ are maximal $\ZZ_K$-orders of $\B_{\QQ(\sqrt2)}$ and $\B_{\QQ(\sqrt5)}$ respectively (see \cite[\S 11.5]{Voight2021}).


\subsection{Totally definite quaternion algebras}\label{sec:31}
Let $\sigma: K \hookrightarrow \CC$ be an embedding that induces the archimedean absolute value $\vert x \vert _\sigma  := \vert \sigma(x) \vert$. It is well-known that, up to equivalence, every archimedean absolute value of $K$ is induced by an embedding of $K $ into $\CC$ and two such embeddings give rise to equivalent absolute values if and only if they coincide or differ by complex conjugation. By an \emph{archimedean place} $\sigma$ of $K$ we mean an equivalence class of an archimedean absolute value $\vert \cdot \vert_\sigma$ on $K$ and we denote by $K_\sigma$ the completion of $K$ with respect to $\vert \cdot \vert_\sigma$ (also called the completion of $K$ at the place $\sigma$) which is isomorphic to $\RR$ or $\CC$. 
In particular, $K$ is called \emph{totally real} if $K_\sigma \cong \RR$ for each archimedean places $\sigma$ of $K$.

\begin{definition}
A quaternion algebra $\B$ over a number field $K$ is \emph{totally definite} if for all archimedian places $\sigma$ of $K$, $\B_\sigma := \B \otimes_{\QQ} K_\sigma \simeq \HH$. 
\end{definition}

We remark that, when $\sigma$ is a complex place, we have that $\B_\sigma = \B \otimes_{\QQ} \CC \simeq \mbox{M}_2(\CC)$. Therefore, if $\B$ is a totally definite quaternion algebra over $K$, then $K$ is necessarily a totally real field. In particular, we will denote by $\B_K$ the totally definite quaternion algebra $\big(\frac{-1,-1}{K}\big)$ defined over the totally real field $K$.  
Henceforth, we will assume that $\B_K$ is totally definite. Then, it can be embedded in $\HH$ and, by restriction, we can embed any $\ZZ_K$-order $\Ord$ of $\B_K$ in $\HH$.   

Now, let $\gamma = \left(
\begin{matrix}
a &  b\\
c & d
\end{matrix}
\right) \in \Mat_2(\HH)$.
Recall that the \emph{Dieudonné determinant} of $\gamma $ is defined as the nonnegative real number
\[
{\det}_\HH(\gamma) := \sqrt{\vert a \vert ^2 \vert d \vert ^2 + \vert c \vert ^2 \vert b \vert ^2 - 2 \Re (c \overline{a} b \overline{d})}.
\]
It can be proven that a matrix $\gamma \in \Mat_2(\HH)$ is invertible if and only if ${\det}_\HH(\gamma) \neq 0$ \cite[Proposition 2.8]{BisiGentili2008}. Then, $\GL_2(\HH)$ is precisely the set of all matrices in $\Mat_2(\HH)$ having non zero Dieudonné determinant. Thus, we can define the \emph{special linear group}  $\SL_2(\HH)$ as the set of all matrices in $\GL_2(\HH)$ with Dieudonné determinant 1 and $\PSL_2(\HH) \cong \SL_2(\HH) / \{ \pm \mathcal{I} \}$.

Of particular interest for us will be the subset $\SL_2(\Ord) \subseteq \SL_2(\HH)$ of all matrices with coefficients in a $\ZZ_K$-order $\Ord$ of a totally definite quaternion algebra $\B$, which in fact is a subgroup. This is perhaps well-known to the experts, but we give a proof for lack of a reference.

\begin{lemma}
Let $\B$ be a totally definite quaternion algebra over $K$ and $\Ord$ be a $\ZZ_K$-order in $\B$. Then, $\SL_2(\Ord)$ is a subgroup of $\SL_2(\HH)$.
\end{lemma}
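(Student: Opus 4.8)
The plan is to show that $\SL_2(\Ord)$ is closed under matrix multiplication and under taking inverses, since it is clearly nonempty (it contains the identity) and is a subset of the group $\SL_2(\HH)$.

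First I would address closure under multiplication. Given $\gamma_1, \gamma_2 \in \SL_2(\Ord)$, the product $\gamma_1 \gamma_2$ certainly has entries in $\Ord$, since $\Ord$ is a ring; the only issue is that the Dieudonné determinant is not a priori multiplicative for matrices over a noncommutative ring. The key fact I would invoke is that $\det_\HH$ \emph{is} multiplicative on $\GL_2(\HH)$ — that is, $\det_\HH(\gamma_1 \gamma_2) = \det_\HH(\gamma_1)\det_\HH(\gamma_2)$ — which is the standard property of the Dieudonné determinant (it is the composite of the determinant with the reduced-norm-type map, and follows from the structure of $\GL_2$ over a division ring, cf. \cite{BisiGentili2008}). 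Granting this, $\det_\HH(\gamma_1\gamma_2) = 1$, so $\gamma_1\gamma_2 \in \SL_2(\Ord)$.

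Next I would handle inverses. If $\gamma = \left(\begin{smallmatrix} a & b \\ c & d \end{smallmatrix}\right) \in \SL_2(\Ord)$, then $\det_\HH(\gamma) = 1 \neq 0$, so $\gamma$ is invertible in $\GL_2(\HH)$ and $\det_\HH(\gamma^{-1}) = 1$ by multiplicativity. It remains to check that $\gamma^{-1}$ has entries in $\Ord$. For this I would use an explicit adjugate-type formula for the inverse of a $2\times 2$ quaternionic matrix in terms of $a, b, c, d$ and their conjugates — the entries of $\gamma^{-1}$ are expressible as products and sums of $a, b, c, d, \overline{a}, \overline{b}, \overline{c}, \overline{d}$, scaled by $\det_\HH(\gamma)^{-2} = 1$. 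Since $\Ord$ is stable under the standard involution (the conjugates of integral elements remain integral: $\trd(\q), \nrd(\q) \in \ZZ_K$ and $\overline{\q} = \trd(\q) - \q \in \Ord$ for $\q \in \Ord$), and $\Ord$ is a ring, all these entries lie in $\Ord$, so $\gamma^{-1} \in \SL_2(\Ord)$.

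The main obstacle is pinning down the right explicit formula for $\gamma^{-1}$ over $\HH$ and verifying that with Dieudonné determinant $1$ the denominator genuinely disappears (one must be careful about left versus right inverses and the noncommutative ordering of factors in the adjugate); the rest is routine, resting on the ring structure of $\Ord$ and its closure under conjugation, together with the multiplicativity of $\det_\HH$ cited from \cite[Proposition 2.8]{BisiGentili2008} and its surrounding discussion.
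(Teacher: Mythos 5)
Your proposal is correct and is essentially the paper's own argument: the paper likewise treats closure under multiplication as immediate and concentrates on inverses, using the explicit Bisi--Gentili inverse formula together with $\q^{-1}=\overline{\q}/\vert\q\vert^{2}$ and the norm relations $\vert d\vert^{2}\vert a-bd^{-1}c\vert^{2}=\det_{\HH}(\gamma)^{2}=1$ to clear denominators, landing on entries such as $\vert d\vert^{2}\overline{a}-\overline{b}d\overline{c}$, which lie in $\Ord$ precisely because $\Ord$ is a ring stable under the standard involution. The only detail you leave implicit is that the adjugate formula written with $a^{-1},b^{-1},c^{-1},d^{-1}$ presupposes $abcd\neq 0$, so the paper handles the case of a vanishing entry separately with a different explicit inverse; your single polynomial formula covers that case as well, but it should be stated.
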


\begin{proof}
It is clear that the right product of matrices in $\SL_2(\Ord)$ is well defined, associative and $\mathcal{I} \in \SL_2(\Ord)$ because $\Ord$ is a ring with unity. Then, we only need to prove that the right-inverse of each $\gamma = \begin{pmatrix}
a & b \\
c & d
\end{pmatrix} \in \SL_2(\Ord)$ lies in $\SL_2(\Ord)$. 

First assume that $abcd \neq 0$, then by \cite[Remark 2.2]{BisiGentili2008}, we have that
\[
\gamma ^{-1} = \begin{pmatrix}
(a-bd^{-1}c )^{-1} & (c-db^{-1}a )^{-1} \\
(b-ac^{-1}d )^{-1} & (d-ca^{-1}b )^{-1}
\end{pmatrix}
\]
with $a^{-1}$, $b^{-1}$, $c^{-1}$, $d^{-1}$ not necessarily in $\Ord$. Using the multiplicity of the norm and \cite[Lemma 2.4]{BisiGentili2008}, we have that 
\[
\vert a \vert^2 \vert d-ca^{-1} b \vert^2 = \vert b \vert^2 \vert c-db^{-1} a \vert^2 = \vert c \vert^2 \vert b-ac^{-1} d \vert^2 = \vert d \vert^2 \vert a-bd^{-1} c \vert^2
= \vert a \vert^2 \vert d \vert^2 + \vert c \vert^2 \vert b \vert^2 -  2 \Re (c \overline{a}b\overline{d} ) = 1.
\]
Then, as $\mathbf{q}^{-1} = \frac{\overline{\mathbf{q}}}{\vert \mathbf{q} \vert ^2}$ and $\mathbf{\overline{q}}^{-1} = \frac{\mathbf{q}}{\vert \mathbf{q} \vert ^2}$,
\[
\gamma ^{-1} = \begin{pmatrix}
(a-bd^{-1}c )^{-1} & (c-db^{-1}a )^{-1} \\
(b-ac^{-1}d )^{-1} & (d-ca^{-1}b )^{-1}
\end{pmatrix} =
\begin{pmatrix}
\frac{(\overline{a} - \overline{b} \overline{d}^{-1} \overline{c})}{ \vert a-bd^{-1}c  \vert^2} & \frac{(\overline{c} - \overline{d} \overline{b}^{-1} \overline{a})}{ \vert c-db^{-1}a  \vert^2} \\
\frac{(\overline{b} - \overline{a} \overline{c}^{-1} \overline{d})}{ \vert b-ac^{-1}d  \vert^2}   & \frac{(\overline{d} - \overline{c} \overline{a}^{-1} \overline{b})}{ \vert d-ca^{-1}b  \vert^2}
\end{pmatrix}
= 
\begin{pmatrix}
\frac{\vert d \vert ^2}{ \vert d \vert ^2 } \frac{(\overline{a} - \overline{b} \overline{d}^{-1} \overline{c})}{ \vert a-bd^{-1}c  \vert^2} & \frac{\vert b \vert ^2}{ \vert b \vert ^2 } \frac{(\overline{c} - \overline{d} \overline{b}^{-1} \overline{a})}{ \vert c-db^{-1}a  \vert^2} \\
\frac{\vert c \vert ^2}{ \vert c \vert ^2 } \frac{(\overline{b} - \overline{a} \overline{c}^{-1} \overline{d})}{ \vert b-ac^{-1}d  \vert^2}   & \frac{\vert a \vert ^2}{ \vert a \vert ^2 } \frac{(\overline{d} - \overline{c} \overline{a}^{-1} \overline{b})}{ \vert d-ca^{-1}b  \vert^2}
\end{pmatrix}
\]
\[= 
\begin{pmatrix}
\vert d \vert ^2 (\overline{a} - \overline{b} \overline{d}^{-1} \overline{c}) & \vert b \vert ^2 (\overline{c} - \overline{d} \overline{b}^{-1} \overline{a}) \\
\vert c \vert ^2 (\overline{b} - \overline{a} \overline{c}^{-1} \overline{d})   & \vert a \vert ^2 (\overline{d} - \overline{c} \overline{a}^{-1} \overline{b})
\end{pmatrix}
= \begin{pmatrix}
\vert d \vert ^2 (\overline{a} - \overline{b} \frac{d}{ \vert d \vert ^2} \overline{c}) & \vert b \vert ^2 (\overline{c} - \overline{d} \frac{b}{ \vert b \vert ^2} \overline{a}) \\
\vert c \vert ^2 (\overline{b} - \overline{a} \frac{c}{ \vert c \vert ^2} \overline{d})   & \vert a \vert ^2 (\overline{d} - \overline{c} \frac{a}{ \vert a \vert ^2} \overline{b})
\end{pmatrix}
= \begin{pmatrix}
\vert d \vert ^2 \overline{a} - \overline{b} d \overline{c} & \vert b \vert ^2  \overline{c} - \overline{d} b \overline{a} \\
\vert c \vert ^2 \overline{b} - \overline{a} c \overline{d}  & \vert a \vert ^2 \overline{d} - \overline{c} a \overline{b}
\end{pmatrix}
\]
with all its coefficients in $\Ord$. 

Now assume that one of the entries of $\gamma$ is $0$. For example, if $a=0$ and $bc \neq 0$, by (2.3) of \cite{BisiGentili2008}, we have that
\[
\gamma ^{-1} = \begin{pmatrix}
-c^{-1} d b^{-1} & c^{-1} \\
b^{-1} & 0
\end{pmatrix}
\]
with $b^{-1}$ and $c^{-1}$ not necessarily in $\Ord$. Then, as the Dieudonné determinant is equal to 1, we have that $\vert c \vert ^2 \vert b \vert ^2 = 1$ and 
\[
\gamma ^{-1} = \begin{pmatrix}
-c^{-1} d b^{-1} & c^{-1} \\
b^{-1} & 0
\end{pmatrix} 
=
\begin{pmatrix}
-\frac{\overline{c}}{\vert c \vert ^2} d \frac{\overline{b}}{\vert b \vert ^2} & \frac{\overline{c}}{\vert c \vert ^2} \\
\frac{\overline{b}}{\vert b \vert ^2} & 0
\end{pmatrix}
=
\begin{pmatrix}
-\frac{1}{\vert c \vert ^2 \vert b \vert ^2} \overline{c} d \overline{b} & \frac{\overline{c}}{\vert c \vert ^2} \frac{\vert b \vert ^2}{\vert b \vert ^2} \\
\frac{\vert c \vert ^2}{\vert c \vert ^2} \frac{\overline{b}}{\vert b \vert ^2} & 0
\end{pmatrix}
=
\begin{pmatrix}
- \overline{c} d \overline{b} & \vert b \vert ^2 \overline{c} \\
\vert c \vert ^2 \overline{b} & 0
\end{pmatrix}
\]
with all its coefficients in $\Ord$. The other cases are analogous.
\end{proof}


\subsection{Unit groups of quaternion orders}\label{sec:32}

In number theory, a very important subgroup of $\ZZ_K$ is the unit group $\ZZ_K^{\times}$ whose structure is well-understood thanks to Dirichlet’s unit theorem \cite[Chapter I \S 7]{Neukirch1999}. On the other hand, a description of the \emph{unit group} $\Ord^{\times}$ of a $\ZZ_K$-order $\Ord$ in an arbitrary quaternion algebra is more complicated. 
However, as we are only interested in $\ZZ_K$-orders in a totally definite quaternion algebras $\B$ over $K$, we can describe $\Ord^{\times}$ in terms of $\ZZ_K^\times$ and the \emph{torsion group} $\Ord^1 := \{ \uu \in \Ord^\times :   \nrd (\uu) = 1 \}$ (which is a finite subgroup of $\Ord^\times$ \cite[Proposition 32.3.7]{Voight2021}) of $\Ord$ as follows. 

Let $J_K$ be the set of all embeddings of $K$ into $\CC$ (in fact, into $\RR$ because $K$ is totally real). It is well known that there are precisely $[K: \QQ]$ distinct such embeddings.
Let $K^\times_+ := \{ w \in  K^\times : \sigma(w)>0 \mbox{ for all } \sigma \in J_K \}$ be the set of totally positive elements of $K^\times$ and $\ZZ_{K + }^\times := \ZZ_{K}^\times \cap K^\times_+$ be the group of totally positive units in $\ZZ_K$. As $\nrd(\Ord)  \subseteq  \ZZ_{K}$ \cite[\S 10.3]{Voight2021} and $\nrd(\B^\times) \subseteq K^\times_+$  \cite[\S 14.7]{Voight2021}, we have that $\nrd(\Ord^\times)  \subseteq  \ZZ_{K + }^\times$. Then, the reduced norm induces the following exact sequence 
\[
1 \longrightarrow \Ord^1 \longrightarrow \Ord^\times \xrightarrow{\nrd} \ZZ_{K + }^\times 
\] 
which implies that $\Ord^1$ is a normal subgroup of $\Ord^\times$. On the other hand, as $Z(\B^\times) = K^\times$, we have that $\ZZ_K^\times \subseteq Z (\Ord ^\times)$, which implies that $\ZZ_K^\times$ is a normal subgroup of  $\Ord^\times$. Thus, $\ZZ_K^\times \Ord^1$ is a normal subgroup of $\Ord^\times$ and, since $\nrd(\ZZ^\times_K \Ord^1) = \ZZ_K^{\times 2}$, we have the following embedding  
\[
\Ord^\times / \ZZ_K^\times \Ord^1 \hookrightarrow \ZZ_{K + }^\times / \ZZ_K^{\times 2}.
\]
Finally, as $\ZZ^\times_K$ is finitely generated \cite[\S 13.3]{AlacaWilliams2004},  $\ZZ_{K + }^\times / \ZZ_K^{\times 2}$ is a finite elementary abelian 2-group. Therefore, $\Ord^\times$ is an extension of $\ZZ_K^\times \Ord^1$ by an elementary abelian 2-group. In fact, Vignéras proved in \cite[Théorème 6]{Vigneras1976} that $[\Ord^\times: \ZZ_K^\times \Ord^1]$ is equal to 1, 2 or 4.

When $K=\QQ$,  it can be proved that $\Ord^\times = \Ord^1 \subseteq \HH^1$ \cite[Lemma 11.5.9]{Voight2021}. Then, $\Ord^\times$ is isomorphic to one of the finite subgroups of $\HH^1$ described above. Such description was crucial in \cite{DiazVerjovskyVlacci2015} to describe the Lipschitz and Hurwitz quaternionic modular groups as well as their fundamental domains. In fact, it can be proved \cite[Chapitre V, Proposition 3.1]{Vigneras1980} that, given a (totally) definite quaternion algebra $\B = \big( \frac{a,b}{\QQ}\big)$ the unit group $\Ord^\times$ of a maximal $\ZZ$-order $\Ord \subseteq \B$ is cyclic of order 2, 4 or 6, except for 
\begin{itemize}
\item $\B = \big( \frac{-1,-3}{\QQ}\big)$ where $\Ord^\times$ is isomorphic to the binary dihedral group $Q_{12} = \langle s_6, j \rangle$; and
\item $\B = \big( \frac{-1,-1}{\QQ}\big)$, where $\Ord^\times \simeq \HH ur(\ZZ)^1$, which is isomorphic to the binary tetrahedral group $2T$.
\end{itemize}

When $K = \QQ(\sqrt{n})$ is a totally real field, a description of $\Ord^\times$ is a bit more complicated because it is no longer finite, but it can be done.  In this case,  by using Dirichlet's unit theorem \cite[Theorem 11.5.1]{AlacaWilliams2004}, we have that
 \begin{equation}\label{DUT}
\ZZ_K^{\times} =  \{ \pm \varepsilon ^{\ell} : \ell \in \ZZ \},
\end{equation}
where $\varepsilon$ is the fundamental unit of $\ZZ_K$ normalized so that $\varepsilon > 1$ for the canonical embedding $K \hookrightarrow \RR$.
Since $K$ is a real quadratic field, it is well known \cite[Chapter 11]{AlacaWilliams2004} that $\ZZ_K^{\times 2} = \langle \varepsilon ^2 \rangle$ and
\[
\ZZ_{K +}^\times = \begin{cases} \langle \varepsilon \rangle & \mbox{if } N_{K/ \QQ} (\varepsilon) = 1 \\ \langle \varepsilon ^2 \rangle & \mbox{if } N_{K/\QQ}(\varepsilon) = -1. \end{cases}
\]
Then, if $N_{K/ \QQ} (\varepsilon) = -1$, we conclude that $\Ord^\times \simeq \ZZ_K^\times \Ord^1 $ and if $N_{K/ \QQ} (\varepsilon) = 1$, we conclude that $\Ord^\times$ is isomorphic to $\ZZ_K^\times \Ord^1$ or to a degree two extension of $\ZZ_K^\times \Ord^1$. 
Finally, as $K$ is totally real, the finite subgroup $\Ord^1$ is embedded in $\HH^1$. Then, we can obtain an explicit description of $\Ord^\times$ by using  (\ref{DUT}) and the description of the finite subgroups of $\HH^1$ given in the previous section.  

For example, keeping with the maximal $\ZZ_K$-orders $\OrdO \subseteq \B_{\QQ(\sqrt{2})}$ and $\OrdI \subseteq \B_{\QQ(\sqrt{5})}$ mentioned above, let $\varepsilon = 1+\sqrt{2}$ and $ \varepsilon = \varphi = \frac{1+\sqrt{5}}{2}$ be the normalized fundamental units of $\ZZ[\sqrt{2}]$ and $\ZZ[\varphi]$ respectively. From \cite[\S 11.5]{Voight2021}, we have that $\OrdO^1$ is isomorphic to the binary octahedral group $2O$ and $\OrdI^1$ is isomorphic icosahedral group $2I$. Then, as $N_{\QQ(\sqrt{2})/\QQ}(1+\sqrt{2})=-1$ and $N_{\QQ(\sqrt{5})/\QQ}(\varphi)=-1$, we have that 
\[
\OrdO^\times \simeq 2O \cdot \ZZ^\times_{\QQ(\sqrt{2})} \quad \mbox{ and } \quad \OrdI^\times \simeq 2I \cdot \ZZ^\times_{\QQ(\sqrt{5})}. 
\]

An example where $\Hur(\ZZ_K)$ is a maximal $\ZZ_K$-order is the following. Let $K=\QQ(\sqrt{13})$ and $\varepsilon = \frac{3+\sqrt{13}}{2}$ be the normalized fundamental unit of $\ZZ_{\QQ(\sqrt{13})}= \ZZ[\frac{1+\sqrt{13}}{2}]$. As $2\varepsilon = 2(\frac{9+\sqrt{13}}{2}) \notin \QQ(\sqrt{13} )^{\times 2}$, we have from \cite[Table 4.3]{Li-Xue-Yu2021} that there are not $\ZZ_{\QQ(\sqrt{13})}$-orders $\Ord$ in $\B_{\QQ(\sqrt{13})}$, with torsion group $\Ord^1$ isomorphic to $2O$ or $2I$. Then, it follows from the classification of finite subgroups of $\HH^1$ that $\Hur(\ZZ_{\QQ(\sqrt{13})})$ is maximal. On the other hand, as $N_{\QQ(\sqrt{13})/\QQ}(\frac{3+\sqrt{13}}{2})=-1$, we obtain 
\[
\Hur(\ZZ_{\QQ(\sqrt{13})})^\times \simeq \Hur(\ZZ_{\QQ(\sqrt{13})})^1 \cdot \ZZ^\times_{\QQ(\sqrt{13})} \simeq 2T \cdot \ZZ^\times_{\QQ(\sqrt{13})}. 
\]

In fact, it follows from \cite[\S 32.7]{Voight2021} that the groups $2O$ and $2I$ only occur as torsion groups of a quaternion $\ZZ_K$-order $\Ord$ of $\B_K$ when the real quadratic field $K$ is $\QQ(\sqrt{2})$ and $\QQ(\sqrt{5})$ respectively. Moreover, it also follows from \emph{loc. cit.} that a totally definite quaternion algebra $\B$ over $K$ contains an order $\Ord$ with torsion group isomorphic to $2T$, $2O$ and $2I$ if and only if $\B \simeq \B_K$. This justifies our particular interest in $\B_K$ over the rest of totally definite quaternion algebras $\B$ in the next sections.


\section{Quaternionic modular groups}\label{sec:4}

In \cite{DiazVerjovskyVlacci2015} the Lipschitz and Hurwitz quaternionic modular groups, acting on $\Hi_{\HH}^1$, are introduced as a quaternionic analog of the classical modular group. Following with such analogy, in this section, we describe a quaternionic analog of the Bianchi and Hilbert-Blumenthal modular groups, which act on $\Hi^5_{\RR}$ and $\Hi_{\HH}^1 \times \Hi_{\HH}^1$ respectively.


\subsection{Bianchi quaternionic modular group}\label{sec:41}

This subsection closely follows Section 13 of the extended version of \cite{DiazVerjovskyVlacci2015} available on  arXiv:1503.07214. Let $\BB^5 \subseteq \RR^5$ be the closed 5-ball and identify the interior of $\BB^5$ with the real hyperbolic 5-space $\Hi^5_{\RR}$ which can be regarded as $\{ (\q,t) \mid \q \in \HH, t>0 \}$. 
As we have seen in $\S$ \ref{sec:2}, $\PSL_2(\HH)$ acts conformally on $\hat \HH \simeq \mathbb{S}^4 = \partial \BB^5$ by quaternionic Möbius transformations (\ref{mobi}).  Then, by Poincaré Extension Theorem, each $\gamma \in \PSL_2(\HH)$ extends canonically to a conformal diffeomorphism of $\BB^5$, which restricted to $\Hi^5_{\RR}$, is an orientation preserving isometry $\tilde{\gamma}$ of the open 5-disk $\DD^5$ with the Poincaré hyperbolic metric.  Reciprocally, any orientation preserving isometry of $\Hi^5_{\RR}$ extends canonically to the ideal boundary $\RR^4 \cup \{ \infty \} \cong \hat{\HH}$ as an element of $\PSL_2(\HH)$. Thus, the map $\gamma \mapsto \tilde{\gamma}$ is an isomorphism and $\PSL_2(\HH) = Isom_+(\BB^5) = Isom_+(\Hi^5_{\RR})$.
Explicitly, for each $\gamma = \begin{pmatrix}
a & b \\
c & d 
\end{pmatrix} \in \PSL_2(\HH)$, the {\it Poincaré's extension} $F_{\tilde{\gamma}} : \Hi^5_{\RR} \rightarrow \Hi^5_{\RR}$ of the quaternionic Möbius transformation $F_\gamma$ is given by (see Theorem 13.2 of loc. cit.):
\[
F_{\tilde{\gamma}}(\q,t) = \left(  \frac{1}{\vert c \q + d \vert^2 + \vert c \vert ^2 t^2} ((a \q + b) (\overline{\q c} + \overline{d}) + a \overline{c} t^2), \frac{t}{\vert c \q +d \vert ^2 + \vert c \vert ^2 t^2}    \right)
\]
and when $t =0$ it corresponds to the action of $\gamma$ in $\hat{\HH} = \HH \cup \{ \infty \}$ as described in Definition \ref{mobi}.

Let $\Gamma \subseteq \PSL_2(\HH)$ be a discrete subgroup acting isometrically on $\Hi^5_{\RR}$. Then,  it follows from standard facts about Kleinian groups that $\Gamma$ acts properly and discontinuously on $\Hi^5_{\RR}$. Thus, $M_\Gamma := \Hi^5_{\RR} / \Gamma$ is a complete real 5-dimensional hyperbolic orbifold. Examples of discrete subgroups of $\PSL_2(\HH)$ can be obtained from quaternion algebras as follows.

\begin{definition}
Let $\Ord$ be a $\ZZ$-order of the quaternion algebra $\B_{\QQ} \subseteq \HH$. We define the \emph{Bianchi quaternionic modular group} associated to $\Ord$ as the group $\PSL_2(\Ord) := \SL_2(\Ord)/\{ \pm \mathcal{I} \}$. 
\end{definition}

Clearly, the Bianchi quaternionic modular group $\PSL_2(\Ord) \subseteq \PSL_2(\HH)$ associated to a $\ZZ$-order $\Ord$ of $\B_{\QQ}$ is a discrete subgroup which acts isometrically on $\Hi^5_{\RR}$. Then, we define the \emph{Bianchi quaternionic modular orbifold} associated to $\Ord$ as
\[
M_\Ord := \Hi^5_{\RR} / \PSL_2(\Ord).
\]

Let $\mathfrak{B}_\Ord$ be a basis for (the lattice) $\Ord$. For example, $\mathfrak{B}_{\HH(\ZZ)} = \{ 1, i, j, k\}$ and $\mathfrak{B}_{\HH ur (\ZZ)} = \{1,i,j, \xi \}$ where $\xi = \frac{1+i+j+k}{2}$. As we mentioned in \S \ref{sec:32}, any $\ZZ$-order $\Ord$ in the quaternion algebra $\B_\QQ$ satisfies $\Ord^\times = \Ord^1$. Then, $\Ord$ is isomorphic to a finite subgroup of $\HH^1$, which can be described by generators and relations as in \S \ref{sec:21}. We denote by $\mathfrak{U}_\Ord$ the set of generators of  $\Ord^\times$. For example, $\mathfrak{U}_{\HH(\ZZ)} = \{ i, j \}$ and $\mathfrak{U}_{\HH ur (\ZZ)} = \{i,\xi, \tau \}$, where $\xi$ is as above and $\tau = \frac{1+i+j-k}{2}$. Thus, from (\ref{gene}) we can obtain the following (non-minimal) set of generators of $\PSL_2(\Ord)$:
\begin{equation}\label{gener}
\left\lbrace  
\begin{pmatrix}
0 & 1 \\
1 & 0
\end{pmatrix} ,
\begin{pmatrix}
1 & \omega \\
0 & 1
\end{pmatrix} ,
\begin{pmatrix}
\uu & 0 \\
0 & \uu
\end{pmatrix} : \omega \in \mathfrak{B}_\Ord, \; \uu \in \mathfrak{U}_\Ord
\right\rbrace
\end{equation}

Using this set of generators, it is easy to see that we can choose a fundamental domain $\mathcal{P}_\Ord$, for the action of the Poincaré extension of 
$\PSL_2(\Ord)$ on  $\Hi^5_{\RR}$, which is a subset of the $5$-dimensional chimney:
\[
\mathcal{P} := \{ (\q,t) = (x_0 + x_1 i + x_2 j + x_3 k, t) \in \Hi^5_{\RR} : - 1/2  \leq x_n \leq 1/2, \; (n=0,1,2,3) \mbox{ and } \vert \q \vert ^2 + \vert t \vert ^2 \geq 1 \}.
\]
Then, we have the following result, which provides us some explicit examples of complete real $5$-dimensional hyperbolic orbifold of finite hyperbolic volume.

\begin{proposition}
Let $\Ord$ be a $\ZZ$-order of the quaternion algebra $\B_{\QQ} \subseteq \HH$. Then, the Bianchi quaternionic modular orbifold $M_\Ord$ has finite hyperbolic volume.
\end{proposition}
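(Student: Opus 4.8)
The plan is to bound $\mathrm{vol}(M_\Ord)$ above by the hyperbolic volume of the $5$-dimensional chimney $\mathcal{P}$ and to show that this volume is finite by a direct integration. By the discussion preceding the statement, the Poincaré extension of $\PSL_2(\Ord)$ admits a fundamental domain $\mathcal{P}_\Ord\subseteq\mathcal{P}$ — obtained by the usual reduction step of moving a given point $(\q,t)$ to one of maximal height $t$ in its orbit (this maximum is attained because $\vert c\q+d\vert^2+\vert c\vert^2 t^2$ takes a discrete set of values bounded away from $0$ as $(c,d)$ runs over the bottom rows of elements of $\SL_2(\Ord)$, these rows lying in the lattice $\Ord\oplus\Ord$ on which the form is positive definite), then translating it into the box $-\tfrac12\le x_n\le\tfrac12$ by parabolic elements without altering $t$, maximality of $t$ forcing $\vert\q\vert^2+t^2\ge 1$ since otherwise the inversion $\q\mapsto\q^{-1}$ would increase $t$. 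Hence $\mathrm{vol}(M_\Ord)=\mathrm{vol}(\mathcal{P}_\Ord)\le\mathrm{vol}(\mathcal{P})$, and it is enough to prove $\mathrm{vol}(\mathcal{P})<\infty$.

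Writing $(\q,t)=(x_0+x_1 i+x_2 j+x_3 k,\,t)\in\Hi^5_\RR$, the hyperbolic volume element is $t^{-5}\,dx_0\,dx_1\,dx_2\,dx_3\,dt$, so with $S(t):=\{\,x\in[-\tfrac12,\tfrac12]^4:\vert x\vert^2\ge 1-t^2\,\}$ we get $\mathrm{vol}(\mathcal{P})=\int_0^\infty \mathrm{vol}_4(S(t))\,t^{-5}\,dt$. For $t\ge 1$ the set $S(t)$ is the full unit box, of Euclidean $4$-volume $1$, and $\int_1^\infty t^{-5}\,dt<\infty$; this is the cusp direction $t\to\infty$ and it is harmless. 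The subtle range is $0<t\le 1$, where the conformal factor $t^{-5}$ blows up. Here I would use the identity $1-\vert x\vert^2=\sum_{n=0}^3(\tfrac14-x_n^2)$, valid on the box, in which every summand is nonnegative: thus for $x\in S(t)$ each term satisfies $\tfrac14-x_n^2\le t^2$, whence $\vert x_n\vert\ge\tfrac12-2t^2$ for all $n$ once $t\le\tfrac12$. Consequently $S(t)$ lies in the union, over the $16$ corners of the box, of cubes of side $2t^2$, so $\mathrm{vol}_4(S(t))\le 16\,(2t^2)^4=O(t^8)$, and $\int_0^{1/2}\mathrm{vol}_4(S(t))\,t^{-5}\,dt=O\!\bigl(\int_0^{1/2}t^3\,dt\bigr)<\infty$; on the remaining interval $\tfrac12\le t\le 1$ the integrand is bounded, so that contribution is finite too. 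Adding the three ranges gives $\mathrm{vol}(\mathcal{P})<\infty$, and the proposition follows.

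The one ingredient that is not entirely routine is this last estimate near $t\to 0$. Unlike the classical modular domain — and unlike the cuspidal chimneys that appear for $\mathrm{SL}_2$ of orders in imaginary quadratic fields acting on $\Hi^3_\RR$, or for the Lipschitz and Hurwitz modular groups acting on $\Hi^1_\HH$, where the relevant translation box is small enough that the sphere condition keeps the height coordinate bounded away from $0$ — here the box $[-\tfrac12,\tfrac12]^4\subseteq\RR^4$ already reaches the unit sphere at its corners, so $\mathcal{P}$ genuinely descends to the ideal boundary $t=0$ at those $16$ points. Finiteness of the volume therefore rests on the corner pieces of $S(t)$ shrinking like $t^8$, which comfortably beats the $t^{-5}$ growth of the volume form; everything else — the reduction theory producing $\mathcal{P}_\Ord\subseteq\mathcal{P}$ and the convergence at the cusp — is standard.
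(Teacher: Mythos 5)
Your argument is correct, and it is essentially the proof the paper leaves implicit: the text merely asserts that a fundamental domain $\mathcal{P}_\Ord$ sits inside the chimney $\mathcal{P}$ and then states the proposition with no further justification, so the content you supply --- the verification that $\mathcal{P}$ itself has finite hyperbolic volume --- is exactly the missing step. Your corner estimate is right: the identity $1-\vert x\vert^2=\sum_{n}(\tfrac14-x_n^2)$ with nonnegative summands forces every reduced point with small $t$ into one of the $16$ corner cubes of side $O(t^2)$, giving $\mathrm{vol}_4(S(t))=O(t^8)$, which beats the $t^{-5}$ density. Your observation that this is a borderline phenomenon is also accurate and worth recording: the corners of $[-\tfrac12,\tfrac12]^4$ lie exactly on the unit sphere, so $\mathcal{P}$ meets the ideal boundary at $16$ points, and a box even slightly larger would produce a chimney of infinite volume --- unlike the $3$-dimensional Bianchi and the Lipschitz/Hurwitz cases, where the box stays strictly inside the sphere and the height is bounded below.

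One caveat, which your proof inherits from the paper: reducing a point into the box $[-\tfrac12,\tfrac12]^4$ uses translations by $1,i,j,k$, i.e.\ it requires the translation lattice of $\Ord$ to be at least as fine as $\ZZ^4$ (as it is for $\HH(\ZZ)$ and $\Hur(\ZZ)$). A general $\ZZ$-order of $\B_\QQ$ need not satisfy this --- for instance $\ZZ\oplus 2i\ZZ\oplus 2j\ZZ\oplus 2k\ZZ$ is an order whose translation box is $[-\tfrac12,\tfrac12]\times[-1,1]^3$, over which the corresponding chimney has \emph{infinite} volume, so the argument as written does not close for it. The fix is standard: any two $\ZZ$-orders of $\B_\QQ$ are commensurable lattices, hence the groups $\PSL_2(\Ord)$ are pairwise commensurable (each contains a principal congruence subgroup of a maximal order), and finite covolume for $\PSL_2(\Hur(\ZZ))$, where your chimney computation applies verbatim, transfers to all of them. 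It would be worth adding one sentence to that effect, since the proposition is stated for arbitrary $\ZZ$-orders.
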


In general, there is no reason to expect that an orbifold possesses a finite manifold cover. However, in our case, thanks to Selberg’s lemma \cite{Alperin1987} \cite{Selberg1960},  the groups $\PSL_2(\Ord)$  that define the orbifolds $M_\Ord$
 have torsion-free subgroups of finite index.  Therefore, for each $\ZZ$-order  $\Ord$,
 there exist a  finite orbifold cover $p: \widetilde{M}_\Ord \rightarrow M_\Ord$, where $\widetilde{M}_\Ord$ is an hyperbolic 5-manifold of finite volume.

Finally, we remark that if we define a cusp of $\PSL_2(\Ord)$ (equivalently of the orbifold $M_\Ord$) as is usually done in number theory \cite{DiamondShurman2005} \cite{Vandergeer2012}, i.e., as a $\PSL_2(\Ord)$-orbit of a rational point in $\B_{\QQ} \cup \{ \infty \} \subseteq \HH \cup \{ \infty \}$, we can prove arithmetically the following result concerning to the cusps of the orbifold $M_{\HH ur (\ZZ)}$.

\begin{proposition}\label{1cusp}
$\PSL_2(\HH ur(\ZZ))$ has only one cusp.
\end{proposition}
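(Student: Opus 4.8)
The plan is to mimic the classical proof that $\mathrm{PSL}_2(\ZZ)$ has a single cusp, working with the Hurwitz order $\Hur(\ZZ)$ in place of $\ZZ$. A cusp is a $\PSL_2(\Hur(\ZZ))$-orbit in $\B_\QQ \cup \{\infty\} \subseteq \HH \cup \{\infty\}$, so it suffices to show that every rational point $\q \in \B_\QQ$ can be sent to $\infty$ by some element of $\PSL_2(\Hur(\ZZ))$. First I would write such a point as $\q = p\,r^{-1}$ with $p, r \in \Hur(\ZZ)$ (clearing denominators using the fact that $\B_\QQ = \Hur(\ZZ)\otimes_\ZZ \QQ$), and moreover arrange that the pair $(p,r)$ is \emph{primitive}, i.e.\ generates $\Hur(\ZZ)$ as a left ideal: $\Hur(\ZZ)p + \Hur(\ZZ)r = \Hur(\ZZ)$. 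This reduction uses that $\Hur(\ZZ)$ is a maximal order and hence, being a maximal order in a definite quaternion algebra over $\QQ$ with class number $1$ (equivalently, a left and right principal ideal ring, a.k.a.\ a left/right Euclidean domain for the norm), every left ideal is principal; dividing out the generator of $\Hur(\ZZ)p + \Hur(\ZZ)r$ makes $(p,r)$ primitive without changing $\q = pr^{-1}$.

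The heart of the argument is then: given a primitive pair $(p,r)$ in $\Hur(\ZZ)$, complete it to a matrix $\gamma = \left(\begin{smallmatrix} p & b \\ r & d\end{smallmatrix}\right) \in \SL_2(\Hur(\ZZ))$, so that $F_{\gamma^{-1}}$ (or $F_\gamma$ up to the antihomomorphism convention of $\S\ref{Sec:22}$) sends $\q = pr^{-1}$ to $\infty$. Producing the second column is exactly the quaternionic analogue of the statement that a primitive vector in $\ZZ^2$ extends to an element of $\SL_2(\ZZ)$. The key steps are: (i) because $\Hur(\ZZ)$ is a right Euclidean domain for the norm, one can run a right division algorithm on $(p,r)$ — the analogue of the Euclidean algorithm — to reduce, via right multiplication by elementary matrices $\left(\begin{smallmatrix}1&\ast\\0&1\end{smallmatrix}\right)$, $\left(\begin{smallmatrix}1&0\\\ast&1\end{smallmatrix}\right)$ and permutation/unit matrices (all of which lie in $\SL_2(\Hur(\ZZ))$ by the generator list (\ref{gener})), the column $\binom{p}{r}$ to $\binom{u}{0}$ for some unit $u \in \Hur(\ZZ)^\times = 2T$; (ii) then $\binom{u}{0}$ visibly extends to $\left(\begin{smallmatrix} u & 0 \\ 0 & u\end{smallmatrix}\right) \in \SL_2(\Hur(\ZZ))$ (Dieudonné determinant $|u|^2 = 1$); (iii) unwinding the elementary operations gives the desired $\gamma$. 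One must check the Dieudonné determinant stays $1$ throughout, which is automatic since all the elementary and unit matrices used have Dieudonné determinant $1$.

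The main obstacle — and the place where quaternionic noncommutativity genuinely intervenes — is step (i): verifying that right division with remainder of smaller norm actually works in $\Hur(\ZZ)$, and that the Euclidean algorithm on a \emph{pair} of quaternions terminates with the gcd. This is classical for the Hurwitz integers (every right ideal is principal, and $\Hur(\ZZ)$ is norm-Euclidean on the right), so I would invoke the standard references on Hurwitz quaternions (e.g.\ Conway–Smith, or \cite[\S 11]{Voight2021}); the only care needed is to keep all multiplications on the correct side so that the elementary row operations correspond to left-multiplication by matrices in $\SL_2(\Hur(\ZZ))$ and the resulting Möbius transformation indeed kills $\q$. A secondary subtlety is the reduction in the first paragraph: one must confirm $\B_\QQ$ has class number one so that $\Hur(\ZZ)$ is a principal ideal ring — this is well known since $\Hur(\ZZ)$ is the (essentially unique) maximal order in the rational Hamilton quaternion algebra, which is ramified only at $2$ and $\infty$ and has class number $1$. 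Once these two facts are in hand, the proof is a direct transcription of the $\SL_2(\ZZ)$ argument.
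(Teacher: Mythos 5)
Your argument is correct and rests on the same arithmetic input as the paper's proof---that $\Hur(\ZZ)$ has class number one, equivalently satisfies a right B\'ezout identity---but the two proofs build the required matrix differently. The paper writes a rational point as $\alpha c^{-1}$ with $\alpha\in\Hur(\ZZ)$ and $c$ a nonzero \emph{rational} integer; because $c$ is central, right B\'ezout \cite[Corollary 11.3.6]{Voight2021} immediately produces $\mu,\nu\in\Hur(\ZZ)$ with $\alpha\mu-c\nu=1$, and the Dieudonn\'e determinant of $\left(\begin{smallmatrix}\alpha&\nu\\ c&\mu\end{smallmatrix}\right)$ collapses in one line to $\vert\alpha\mu-c\nu\vert=1$, again by commutativity of $c$. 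You instead keep a general quaternionic denominator $r$, reduce $(p,r)$ to a primitive pair using principality of left ideals, and complete the primitive column to an element of $\SL_2(\Hur(\ZZ))$ by running the norm-Euclidean algorithm through elementary matrices, with the determinant condition handled by multiplicativity of $\det_\HH$. Your route is the verbatim transcription of the $\SL_2(\ZZ)$ argument and yields the stronger statement that every left-coprime pair in $\Hur(\ZZ)$ occurs as a column of a matrix in $\SL_2(\Hur(\ZZ))$, at the price of the left/right bookkeeping you flag (and one slip: you cannot ``right multiply'' a column vector by an elementary matrix---those must be row operations, i.e.\ left multiplications); the paper's central-denominator trick short-circuits exactly that bookkeeping and avoids the Euclidean algorithm altogether. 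One further caveat applies to both proofs equally: reducing to a coprime pair $(\alpha,c)$ with $c$ still a \emph{rational} integer, as the paper does, requires a word of justification that your more general primitive-pair reduction does not.
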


\begin{proof}
As $\HH ur(\ZZ)$ is a lattice of $\HH$ we have that $\HH ur(\ZZ) \cdot \QQ = \HH$. Then, each element $\beta$ of $\HH$ can be write as $\beta = \alpha c^{-1}$ with $\alpha \in \HH ur(\ZZ)$, $0 \neq c \in \ZZ$. In fact, we can choose $\alpha$ and $c$ relatively prime. 
By right Bézout’s theorem, \cite[Corollary 11.3.6]{Voight2021} there exists $\mu, \nu \in \HH ur(\ZZ)$ such that
\[
\alpha \mu - c \nu = 1.
\]
This gives $\gamma =  \left( \begin{matrix}
\alpha &  \nu \\
c & \mu
\end{matrix}
\right)$ such that $F_\gamma (\infty) =  \alpha c^{-1}$. Using  Lemma 2.4 of \cite{BisiGentili2008} and the commutativity of $c$, we have that 
\[
{\det}_\HH (\gamma) = \sqrt{\vert \alpha \vert ^2 \vert \mu \vert ^2 + \vert c \vert ^2 \vert \nu \vert ^2 - 2 \Re (c \overline{\alpha} \nu \overline{\mu})} = \sqrt{\vert c \nu - c \alpha c^{-1} \mu)  \vert ^2} = \sqrt{\vert  \alpha \mu- c \nu)  \vert ^2} = 1
\] 
then, $\gamma \in \PSL_2(\HH ur(\ZZ))$. 
Thus, the only cusp  of $\PSL_2(\HH ur(\ZZ))$ is the orbit of $\infty$.
\end{proof}

\begin{remark}
In analogy with the class number $h_{\ZZ_K}$ of the ring of integers $\ZZ_K$ of a number field $K$, the notion of a left (resp. right) class number $h_\Ord$ associated with a $\ZZ_K$-order $\Ord$ in a quaternion algebra  $\B$ over $K$ is well understood \cite[Chapter 17]{Voight2021}. In particular, from \cite[Proposition 11.3.4]{Voight2021}, we have that $h_{\HH ur(\ZZ)} = 1$. Then, the previous result is compatible with the classical fact that the number of cusps of a Bianchi \cite[\S 7.2]{ElstrotGrunewaldMennicke1998} or Hilbert-Blumenthal \cite[(1.1)Proposition]{Vandergeer2012} orbifold is equal to the class number of the associated quadratic field.
\end{remark}


\subsection{Hilbert-Blumenthal quaternionic modular group}\label{sec:42}

Let $K=\QQ(\sqrt{n})$ be a real quadratic field, $\ZZ_K$ its ring of integers and $\Ord$ be a $\ZZ_K$-order in the totally definite quaternion algebra $\B_K$. It is well-known that the embedding $\ZZ_K \hookrightarrow \RR$ is not discrete. Then, contrary to the previous case, we cannot discreetly embed $\Ord$ into $\HH$ and consequently $\PSL_2(\Ord)$ is not a discrete subgroup of $\PSL_2(\HH)$. However, $\ZZ_K$ admits a discrete embedding \cite[\S 4.2 Proposition 2]{Samuel70}
\[ 
\ZZ_K \hookrightarrow \RR \times \RR,
\] 
via the Galois twist $w = \alpha + \beta \sqrt{n} \mapsto (w, \sigma(w)) := ( \alpha + \beta \sqrt{n}, \alpha - \beta \sqrt{n})$, which induce a discrete embedding
\begin{equation}\label{embed}
	\Ord \hookrightarrow\HH \times\HH,
\end{equation}
given by $\q  \mapsto	(\q, \sigma(\q)) := \big( x_0 + x_1 i + x_2 j + x_3 k, \; \sigma (x_0) + \sigma (x_1) i + \sigma (x_2) j + \sigma (x_3) k   \big)$, and finally (\ref{embed}) extend to a discrete embedding 
\begin{equation}\label{embed1}
\PSL_2(\Ord) \hookrightarrow \PSL_2(\HH) \times \PSL_2(\HH).
\end{equation}
Then, we can identify the elements of $\PSL_2(\Ord)$ with their image under (\ref{embed1}). 
Observe that the embedding (\ref{embed}) is more natural from the point of view of Minkowski's geometry of numbers in the sense that as $\B_K$ is a totally definite quaternion algebra over a real quadratic field then $\B_K \otimes_\QQ  \RR \cong \HH \times \HH$.

Let $\Hi_{\HH}^2 := \Hi_{\HH}^1 \times \Hi_{\HH}^1 \subseteq \HH \times \HH$, which is isometric to $\Hi_{\RR}^4 \times \Hi_{\RR}^4$ with the Riemannian product metric of Poincare metrics. As expected, $\PSL_2(\HH) \times \PSL_2(\HH)$ acts on $\HH \times \HH$ by quaternionic Möbius transformations $(\gamma_1, \gamma_2) \cdot (\q_1, \q_2) = (F_{\gamma_1} (\q_1), F_{\gamma_2} (\q_2))$ but  not all $(\gamma_1, \gamma_2) \in \PSL_2(\HH) \times \PSL_2(\HH)$ leaves invariant $\Hi_{\HH}^2$.
As in the 1-dimensional case, the set $\M_{\Hi_{\HH}^2} \subseteq \PSL_2(\HH) \times \PSL_2(\HH)$ of couples $(\gamma_1, \gamma_2)$ of matrices that leaves invariant $\Hi_{\HH}^2$ is isomorphic to $Conf_+(\Hi_{\HH}^2)$ and $Isom_+(\Hi_{\HH}^2)$, and it can be characterized as the set of $(\gamma_1, \gamma_2) \in \PSL_2(\HH) \times \PSL_2(\HH)$ such that both $\gamma_1$ and $\gamma_2$ satisfy the BG-conditions. Moreover, $\M_{\Hi_{\HH}^2}$ acts on the boundary $\partial \Hi_{\HH}^2 \cong \mathbb{S}^3 \times \mathbb{S}^3$ of $\Hi_{\HH}^2$ and $\M_{\Hi_{\HH}^2} \cong Conf_+(\mathbb{S}^3 \times \mathbb{S}^3)$.
 
Now, we are ready to describe a special kind of  isometries of $\Hi_{\HH}^2$ lying in 
\[
\PSLBG(\Ord) := \PSL_2(\Ord) \cap \M_{\Hi_{\HH}^2},
\]
 which will be used to define our quaternionic modular group. 

\begin{definition}
Let $\Ord$ be a $\ZZ_K$-order in $\B_K$ and $\Im \Ord$ be the set of pure elements of $\B_K$ lying in $\Ord$. We define the \emph{subgroup of $\Im \Ord$-translations} of $\PSLBG(\Ord)$ by elements of $\Im \Ord$ as 
\[
\T_{\Im \Ord}  := \left\lbrace T_b = \begin{pmatrix}
1 & b \\
0 & 1
\end{pmatrix} \in \PSL_2(\Ord) \mid  \; b \in \Im \Ord \right\rbrace .
\]
\end{definition}

A \emph{translation} in $\Hi_{\HH}^2$ is defined as a transformation 
\[
T_{(b_1, b_2)} := (F_{\gamma_1}, F_{\gamma_2}): \Hi_{\HH}^2 \longrightarrow \Hi_{\HH}^2
\] 
associated to a couple of matrices of the form 
\[
(\gamma_1, \gamma_2)= \left( \begin{pmatrix}
1 & b_1 \\
0 & 1
\end{pmatrix},
\begin{pmatrix}
1 & b_2 \\
0 & 1
\end{pmatrix} \right) \in \M_{\Hi_{\HH}^2},
\] 
where $b_1$ and $b_2$ are such that $\Re(b_1) =0$ and $\Re(b_2)=0$. Note that, if $b \in \Ord$ and $\Re(b) =0$, then $\Re(\sigma(b)) =0$ and we can identify the group $\mathcal{T}_{\Im \Ord}$ with the set of translations in $\Hi_{\HH}^2$ of the form $T_{(b, \sigma(b))}$, with $b \in \Im \Ord$.

\begin{remark}
Note that, if $\Ord$ is the $\ZZ_K$-order $\HH(\ZZ_K) := \{ \q = t + x i + y j + zk \mid w,x,y,z \in \ZZ_K \}$ of $\B_K$, we have that 
\[
\Im \HH(\ZZ_K) = \left\lbrace \frac{1}{2}(\q-\overline{\q}) \mid \q \in \HH(\ZZ_K) \} = \{ x i + y j + zk : x,y,z \in \ZZ_K \right\rbrace,
\] 
in analogy with the imaginary part of a complex number.
However, $\Im \Ord$ is not always equal to the set  $\{\frac{1}{2}(\q-\overline{\q}) \mid \q \in \Ord \}$. For example, let $K=\QQ(\sqrt{n})$, with $n \notequiv 1 \mod 4$ (then its ring of integers $\ZZ_K = \ZZ[\sqrt{n}]$), and consider the $\ZZ[\sqrt{n}]$-order of $\B_{\QQ(\sqrt{n})}$
\[
\Hur(\ZZ[\sqrt{n}]) = \left\lbrace \q = t+xi+yj+zk \mid t,x,y,z \in \ZZ[\sqrt{n}] \mbox{ or } t,x,y,z \in \ZZ[\sqrt{n}] +\frac{1}{2} \right\rbrace .
\] 
It is easy to see that $\frac{i+j+k}{2}  \in \{\frac{1}{2}(\q-\overline{\q}) \mid \q \in \ZZ[\sqrt{n}] \} \subseteq \B_K^0$, by taking $\q = \frac{1+i+j+k}{2} \in \ZZ[\sqrt{n}]$, but $\frac{i+j+k}{2} \notin \Hur(\ZZ[\sqrt{n}])$ and in particular $\frac{i+j+k}{2} \notin \Im \Hur(\ZZ[\sqrt{n}])$.
\end{remark}

\begin{definition}\label{scalu} 
Let $\Ord$ be a $\ZZ_K$-order in $\B_K$ and $\varepsilon$ be the fundamental unit of $\ZZ_K$. We define the \emph{scalar unitary subgroup} of $\PSLBG(\Ord)$ as the set of matrices 
\[
\U_\varepsilon(\Ord) := \left\lbrace
D_\ell := \begin{pmatrix}
\varepsilon^\ell & 0 \\
0 & \varepsilon^{-\ell}
\end{pmatrix} \in \PSL_2(\Ord) \mid \ell \in \NN \right\rbrace .
\]
\end{definition}

A \emph{left bi-homothetic transformation} in $\Hi_{\HH}^2$ is defined as a transformation $h_{(c_1, c_2)} : \Hi_{\HH}^2 \rightarrow \Hi_{\HH}^2$ given by the map $(\q_1,\q_2) \mapsto (c_1 \q_1, c_2 \q_2)$, where $c_1, c_2 \in \HH$ are such that $\Re(c_1 \q_1) >0$ and $\Re(c_2 \q_2) >0$. Note that $D_\ell \in \U_\varepsilon(\Ord)$ defines the left bi-homothetic transformation $h_{(\varepsilon^{2\ell} ,  \sigma(\varepsilon)^{2\ell})}$ in $\Hi_{\HH}^2$.

\begin{definition}\label{torsu} 
Let $\Ord$ be a $\ZZ_K$-order in $\B_K$. We define the \emph{torsion unitary subgroup} of $\PSLBG(\Ord)$ as the set of matrices 
\[
\U^1(\Ord) := \left\lbrace
D_\uu := \begin{pmatrix}
\uu & 0 \\
0 & \uu
\end{pmatrix} \in \PSL_2(\Ord) \mid \uu \in \Ord^1 \right\rbrace .
\]
\end{definition}

Recall from \S \ref{sec:21}, that $\HH^1$ acts by rotation on $\HH^0 \simeq \RR^3$ via conjugation. Then, we define a \emph{left bi-rotation} in $\Hi_{\HH}^2$ as a transformation $r_{(\uu_1, \uu_2)} :  \Hi_{\HH}^2 \rightarrow \Hi_{\HH}^2$ given by the map $(\q_1,\q_2) \mapsto (\uu_1 \q_1 \uu_1^{-1}, \uu_2 \q_2 \uu_2^{-1})$, where $\uu_1, \uu_2 \in \HH^1$. Note that as $\Ord^1 \subseteq \HH^1$ and $\sigma(\uu) \in \Ord^1$ for all $\uu \in \Ord^1$, then $D_\uu \in \U^1(\Ord)$ defines the left bi-rotation $r_{(\uu,  \sigma(\uu))}$ in $\Hi_{\HH}^2$.

Finally the \emph{inversion} in $\Hi_{\HH}^2$ is defined as $(I,I): \Hi_{\HH}^2 \rightarrow \Hi_{\HH}^2$, were $I$ is the usual inversion defined by the matrix $ \begin{pmatrix}
0 & 1 \\
1 & 0
\end{pmatrix}$.

\begin{definition}\label{ModGrp}
Let $K$ be a real quadratic field and $\Ord$ be a $\ZZ_K$-order in the quaternion algebra $\B_K$. We define the \emph{Hilbert-Blumenthal quaternionic modular group} $\Gamma(\Ord) \subseteq \PSLBG(\Ord)$ associated  to $\Ord$ as the group generated by $\U_\varepsilon(\Ord)$, $\U^1(\Ord)$, $\T_{\Im \Ord}$ and $I$. 
\end{definition}

\begin{remark}
When $N_{K/ \QQ} (\varepsilon) = 1$, $\Ord^\times$ could be a degree two extension of $\ZZ_K^\times \Ord^1$ and, in such case, $\Ord^\times \simeq \ZZ_K^\times \Ord^1 \langle 1+ i \rangle$ which follows from \cite[Proposition 6]{Vigneras1976} and \cite[Table 4.3, \S 8]{Li-Xue-Yu2021}. However, we do not include $1+i$ in Definition \ref{scalu} or in Definition \ref{torsu} (then in Definition \ref{ModGrp}) because in both cases $1+i$ does not produce a matrix satisfying BG-conditions.
\end{remark}

\begin{definition}
Let $K$ be a real quadratic field and $\Ord$ be a $\ZZ_K$-order in the quaternion algebra $\B_K$. We define the \emph{Hilbert-Blumenthal quaternionic orbifold} associated to $\Ord$ as
\[
M_{\Gamma(\Ord)} : = \Gamma(\Ord) \backslash \Hi^2_{\HH}.
\]
\end{definition}


\section{Cusp shapes of Hilbert-Blumenthal quaternionic orbifolds}\label{sec:4}

A first step to understanding the geometry of the Hilbert-Blumenthal quaternionic orbifolds is to be able to describe the shape of its cusps.
In this section, we give a description of the cusp at $\infty$ of $M_{\Gamma (\Ord)}$ following \cite{Hirzebruch1973}, \cite{Mcreynolds2008} and \cite{QuinnVerjovsky2020}.

In order to do that, recall that an $(m,1)$-\emph{torus bundle} is the total space of a fiber bundle with base manifold the circle $\mathbb{S}^1$ and fiber the $m$-torus $\mathbb{T}^m$. 
Then, we say that a topological space $M$ is a \emph{virtual} $(m,1)$-\emph{torus bundle} if $M$ is finitely covered by an $(m,1)$-torus bundle.

Interesting examples of (2,1)-torus bundles can be constructed by using the totally positive units of real quadratic fields as in \cite[\S 2]{Hirzebruch1973}. Let $K=\QQ(\sqrt{n})$ be a real quadratic field with ring of integers $\ZZ_K = \ZZ \oplus \ZZ \theta $ and unit group $\ZZ_K^{\times} =  \{ \pm \varepsilon ^{\ell} : \ell \in \ZZ \}$. For simplicity, let's assume that $N_{K/\QQ}(\varepsilon) = -1$, then $\ZZ_{K +}^\times = \langle \epsilon \rangle $ with $\epsilon = \varepsilon^2$.  As we have seen in $\S$ \ref{sec:42}, $\ZZ_K$ admits a discrete embedding $\ZZ_K \hookrightarrow \RR \times \RR$, via the Galois twist $w  \mapsto (w, \sigma(w))$, producing a lattice $\Lambda$ in $\RR^2$. Thus, considering the action of $\ZZ_{K+}^\times$ on $\ZZ_K$ by multiplication, we can extend this action to an action of $\langle \epsilon \rangle$ on $\Lambda$ by 
\begin{equation}\label{actio}
\varphi_\epsilon (w, \sigma(w)) := (\epsilon w, \sigma(\epsilon) \sigma(w)) = (\epsilon w, \epsilon^{-1} \sigma(w)) 
\end{equation}
and define the semi-direct product
\begin{equation}\label{semipro}
S_\epsilon (\ZZ_K , \ZZ_{K+}^\times) = \Lambda \rtimes _{\varphi_\epsilon} \langle \epsilon \rangle.
\end{equation}
As $\epsilon \in \ZZ_{K+}^\times$ is totally positive, by sending $\epsilon \mapsto \log \epsilon$ and $\sigma(\epsilon) \mapsto \log \sigma(\epsilon)$, we can identified $\ZZ_{K+}^\times$ with an additive subgroup of rank 1 in $\RR$. Here, we think on $\RR$ as the hyperplane $\log x + \log \sigma(x) = 0$ in $\RR^2$, which can be carried out in this way because $\epsilon \sigma(\epsilon) = \epsilon \epsilon^{-1} = 1$.  This allows us to extend the action of $\langle \epsilon \rangle$ in $\Lambda$ to an action of $\RR$ on $\RR^2$ and shows that $S_\epsilon (\ZZ_K , \ZZ_{K+}^\times)$ embeds naturally as a discrete subgroup in the solvable Lie group $S (\RR^2 , \RR) = \RR^2 \rtimes \RR$. Then, given the exact sequences (semi-direct products)
\[
0 \longrightarrow \Lambda \longrightarrow  S_\epsilon (\ZZ_K , \ZZ_{K+}^\times) \longrightarrow \langle \epsilon \rangle \longrightarrow 0
\]
\[
0 \longrightarrow \RR^2 \longrightarrow  S(\RR^2 , \RR) \longrightarrow \RR \longrightarrow 0
\]
we have a quotient sequence of coset spaces
\[
0 \longrightarrow \mathbb{T}^2 \longrightarrow M_\epsilon \longrightarrow \mathbb{S}^1 \longrightarrow 0
\]
which proves that the quotient spaces 
\[
M_\epsilon = S_\epsilon (\Lambda, \langle \epsilon \rangle ) \backslash S(\RR^2 , \RR)
\]
is a (2,1)-torus bundle with base $\mathbb{S}^1 = \langle \epsilon \rangle \backslash \RR$ and fiber $\mathbb{T}^2 = \Lambda  \backslash \RR^2$. In particular, $M_\epsilon$ is a solvmanifold with fundamental group 
\[
\pi_1(M_\epsilon) = S_\epsilon (\ZZ_K , \ZZ_{K+}^\times) \cong \ZZ^2 \rtimes \ZZ.
\] 
The last identification is given by mapping  $(\lambda, \epsilon^l) = ((\lambda_1, \lambda_2), \epsilon^l)$ to either $(\lambda_1 = \alpha + \beta \theta, l)$ or $(\lambda_2 = \alpha + \beta \sigma(\theta), -l)$.

\begin{remark}
The action (\ref{actio}) can be described very explicitly if we know the fundamental unit of the real quadratic field. For example, let $K = \QQ(\sqrt{2})$  with ring of integers $\ZZ_K = \ZZ[\sqrt{2}]$ and fundamental unit $\varepsilon = 1 + \sqrt{2}$. By Dirichlet's unit theorem, we have that $\ZZ_K^\times = \{ (1 \pm \sqrt{2})^\ell: \ell \in \ZZ \} $ and, as $N_{K/\QQ}(1+\sqrt{2}) = -1$, $\ZZ_{K+}^\times = \langle (1 + \sqrt{2})^2  \rangle = \{ (1 \pm \sqrt{2})^{2l}: l \in \ZZ \} $. Now, observe that, to describes $(\ref{actio})$ it is enough to describe the action of powers of $\epsilon = (1 + \sqrt{2})^2$ on an integer $\alpha + \beta \sqrt{2} \in \ZZ[\sqrt{2}]$. Then, explicitly we have 
\[
(1+\sqrt2)^2( \alpha +\beta \sqrt2)
	=3\alpha+4\beta+(2\alpha+3\beta)\sqrt2,
\]	
which can be also expressed in matrix terms as the product
$\begin{pmatrix}
		3 & 4\\
		2 & 3
	\end{pmatrix} \begin{pmatrix}
			\alpha \\
			\beta
		\end{pmatrix}$ 
via the identification  $\alpha + \beta \sqrt2\leftrightarrow (\alpha, \beta)^\intercal$.
\end{remark}

The importance of these kind of torus bundles and the explicitness of their construction is due to the fact that all cusp-cross-sections of a Hilbert-Blumenthal surface is a (2,1)-torus bundle constructed in this way \cite{Hirzebruch1973}. In fact, it allows to classify all the Sol 3-manifolds \cite{Mcreynolds2008}. 
In this same line of thought, the main goal of this section is to prove the following result.

\begin{theorem}\label{Teorema}
Let $K$ be a real quadratic field and $\Ord$ be a $\ZZ_K$-order in the quaternion algebra $\B_K$.  Then, a cross-section of the cusp at $\infty$ of the Hilbert-Blumenthal quaternionic orbifold $M_{\Gamma(\Ord)}$ associated to $\Ord$ is a virtual $(6,1)$-torus bundle. 
\end{theorem}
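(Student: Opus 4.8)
The plan is to identify the cusp stabilizer of $\infty$ in $\Gamma(\Ord)$ explicitly as a group acting on the horosphere cross-section, and then recognize the quotient as a virtual $(6,1)$-torus bundle. First I would observe that the subgroup of $\Gamma(\Ord)$ fixing $\infty = (\infty,\infty) \in \partial\Hi^2_\HH$ is generated by the translations $\T_{\Im\Ord}$, the torsion unitary subgroup $\U^1(\Ord)$, and the scalar unitary subgroup $\U_\varepsilon(\Ord)$; the inversion $I$ does not fix $\infty$, so the stabilizer $\Gamma(\Ord)_\infty$ is exactly the subgroup generated by these three families. A horosphere cross-section of the cusp at $\infty$ in $\Hi^2_\HH = \Hi^1_\HH \times \Hi^1_\HH$ (each factor being a copy of $\Hi^4_\RR$ in the upper half-space model) is a product of two horospheres, each of which is a copy of $\partial\Hi^1_\HH \setminus \{\infty\} \cong \HH^0 \cong \RR^3$, so the cross-section is $\RR^3 \times \RR^3 = \RR^6$, and I would describe the induced action of $\Gamma(\Ord)_\infty$ on this $\RR^6$.

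The key computation is then the action on $\RR^6$. The translation subgroup $\T_{\Im\Ord}$ acts on $\RR^3 \times \RR^3$ by $(v_1,v_2) \mapsto (v_1 + b, v_2 + \sigma(b))$ for $b \in \Im\Ord$, and since $\Im\Ord$ is a rank-$6$ lattice (it is a $\ZZ_K$-lattice of rank $3$ over the rank-$2$ ring $\ZZ_K$, embedded discretely via the Galois twist into $\HH^0 \times \HH^0$), this gives a lattice $\Lambda \cong \ZZ^6$ acting by translations with quotient $\mathbb{T}^6$. The scalar subgroup $\U_\varepsilon(\Ord)$ acts via the left bi-homothetic transformations, which on the boundary $\HH^0 \times \HH^0$ become the linear maps $(v_1,v_2) \mapsto (\varepsilon^{2\ell} v_1, \sigma(\varepsilon)^{2\ell} v_2) = (\varepsilon^{2\ell} v_1, \varepsilon^{-2\ell} v_2)$ (using the conjugation action at infinity described for $\mathcal{D}$ in \S\ref{sec:2}, together with the homothety factor $\varepsilon^{2\ell}$), and this normalizes $\Lambda$ because $\varepsilon \Im\Ord = \Im\Ord$ up to conjugation — more precisely, multiplication by a totally positive unit preserves the order and hence the lattice. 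Next I would show that modulo the finite torsion group $\U^1(\Ord)$, which acts on $\RR^6$ by the bi-rotation $(v_1,v_2)\mapsto(\uu v_1 \uu^{-1}, \sigma(\uu) v_2 \sigma(\uu)^{-1})$ — a finite subgroup of $\SO(3)\times\SO(3)$ — the cusp cross-section $\Gamma(\Ord)_\infty \backslash \RR^6$ has a finite cover by the quotient of $\RR^6$ by just $\Lambda \rtimes \langle D_{\ell_0} \rangle$ for a suitable power $\ell_0$ making the extension torsion-free, and this latter quotient is precisely an $(m,1)$-torus bundle over $\mathbb{S}^1$ with $m = 6$: it fibers over $\langle D_{\ell_0}\rangle\backslash\RR \cong \mathbb{S}^1$ with fiber $\Lambda\backslash\RR^6 = \mathbb{T}^6$ and monodromy the linear Anosov map given by the block-diagonal matrix $\mathrm{diag}(M_\varepsilon^{2\ell_0}, (M_\varepsilon^{-1})^{2\ell_0}) \in \GL_6(\ZZ)$ where $M_\varepsilon \in \GL_3(\ZZ)$ represents multiplication by $\varepsilon^2$ on the rank-$3$ $\ZZ_K$-lattice $\Im\Ord$ written in a $\ZZ$-basis.

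The main obstacle I expect is twofold: first, carefully justifying that the action of $\U_\varepsilon(\Ord)$ at infinity is really by the linear map $(\varepsilon^{2\ell}v_1,\varepsilon^{-2\ell}v_2)$ rather than something twisted by a rotation — this requires unwinding the Iwasawa-type decomposition and the fact that $D_\ell$ is a genuine homothety on each factor (with the $\sigma$-conjugate in the second factor having homothety ratio $\sigma(\varepsilon)^{2\ell} = \varepsilon^{-2\ell}$ since $N_{K/\QQ}(\varepsilon) = \pm1$ forces $\sigma(\varepsilon) = \pm\varepsilon^{-1}$; one must handle the sign, noting that negative scalars act trivially projectively or get absorbed), and checking that this linear map indeed preserves the lattice $\Lambda$ and acts as an Anosov (hyperbolic) automorphism because $\varepsilon^2 > 1$ while $\sigma(\varepsilon^2) = \varepsilon^{-2} < 1$, so $M_\varepsilon^{2\ell_0}$ has all eigenvalues off the unit circle. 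Second, the passage to a finite cover to kill the torsion $\U^1(\Ord)$ and to pass from $\NN$-indexed $\U_\varepsilon$ to a genuine $\ZZ$-action requires an application of Selberg's lemma (as already invoked in \S\ref{sec:41}) or a direct argument that $\Gamma(\Ord)_\infty$ is virtually $\ZZ^6\rtimes\ZZ$; once that is in hand, the identification of $\Lambda\rtimes\langle\epsilon\rangle\backslash S(\RR^6,\RR)$ with a $(6,1)$-torus bundle is exactly parallel to the $(2,1)$ case recalled above, replacing $\RR^2$ by $\RR^6$ and the $2\times2$ matrix $M_\epsilon$ by its $6\times6$ block analogue, and the proof concludes by invoking the definition of virtual $(6,1)$-torus bundle.
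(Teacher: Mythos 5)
Your overall strategy is the same as the paper's: identify $\Stab_{\Gamma(\Ord)}(\infty)$ as $\Im\Ord \rtimes (\ZZ_{K+}^\times \times \Ord^1)$, realize the torsion-free part $\ZZ^6\rtimes\ZZ$ as the fundamental group of a $\mathbb{T}^6$-bundle over $\mathbb{S}^1$ whose monodromy is the $6\times 6$ block matrix of multiplication by $\varepsilon^2$ on $\Im\Ord$, and use the finiteness of $\Ord^1$ to obtain the word ``virtual''. Your handling of the sign of $\sigma(\varepsilon)$ and your explicit verification that the monodromy is Anosov are more careful than the paper's.

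There is, however, one dimensional slip in your setup. The cusp cross-section of the $8$-dimensional orbifold $M_{\Gamma(\Ord)}$ must be $7$-dimensional, not $6$-dimensional: in $\Hi^1_\HH\times\Hi^1_\HH$ the stabilizer of $(\infty,\infty)$ does \emph{not} preserve a product of horospheres $\{\Re(\q_1)=c_1\}\times\{\Re(\q_2)=c_2\}$, because $D_\ell$ rescales the two heights by $\varepsilon^{2\ell}$ and $\varepsilon^{-2\ell}$ respectively. The invariant horospherical cross-section is the level set $\{\Re(\q_1)\Re(\q_2)=c\}\cong \HH^0\times\HH^0\times\RR^+\cong\RR^7$, i.e.\ the coset space of the solvable group $S(\RR^6,\RR)=\RR^6\rtimes\RR$ that the paper quotients in Lemma \ref{LemI}. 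The extra $\RR$-factor, on which $\U_\varepsilon(\Ord)$ acts by translation (via $\ell\mapsto \ell\log\varepsilon^2$), is exactly what produces the circle base of the bundle; without it, the quotient of $\mathbb{T}^6$ by the $\ZZ$-action generated by an Anosov diffeomorphism is not even Hausdorff, so ``$\Gamma(\Ord)_\infty\backslash\RR^6$'' cannot be the cross-section. You implicitly restore this factor when you fiber over $\langle D_{\ell_0}\rangle\backslash\RR\cong\mathbb{S}^1$, so your conclusion is correct, but the cross-section should be taken to be $\RR^6\times\RR$ from the start. Two smaller points: $\Lambda\rtimes\langle D_{\ell_0}\rangle$ is torsion-free for every $\ell_0\ge 1$, so no ``suitable power'' is needed; and Selberg's lemma is not required here, since the only torsion in the stabilizer sits in the finite factor $\U^1(\Ord)$, which is precisely what passing to $\Delta(\Ord)=\T_{\Im\Ord}\rtimes\U_\varepsilon(\Ord)$ (of finite index in $\A(\Ord)$) removes.
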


In order to prove our result, we start by constructing a suitable $(6,1)$-torus bundle by using the description of $\Gamma(\Ord)$ given in \S \ref{sec:42}.

Let $\U_\varepsilon (\Ord) \subseteq \Gamma(\Ord)$ be the scalar unitary subgroup of $\PSLBG (\Ord)$ and $\T_{\Im \Ord} \subseteq \Gamma(\Ord)$ be the subgroup of $\Im \Ord$-translations of $\PSLBG(\Ord)$ by elements of $\Im \Ord$. Note that, there is an action of $\U_\epsilon (\Ord)$ on $\Im \Ord$ by Möbius transformations 
\begin{equation}\label{action}
F_{D_\ell} (b) = \begin{pmatrix}
\varepsilon^\ell & 0 \\
0 & \varepsilon^{-\ell}
\end{pmatrix} (b) = \varepsilon^{2\ell} b
\end{equation} 
or equivalently an action of $\U_\varepsilon (\Ord)$ on $\T_{\Im \Ord}$ by conjugation $D_\ell \cdot T_b \cdot D_{-\ell} = T_{2\ell b}$.
Restricting the embedding (\ref{embed}) to $\Im \Ord \subseteq \Ord$, we obtain a discrete embedding $\Im \Ord \hookrightarrow\HH^0 \times\HH^0$ via the Galois twist $b  \mapsto	(b, \sigma(b)) = \big( x_1 i + x_2 j + x_3 k, \;  \sigma (x_1) i + \sigma (x_2) j + \sigma (x_3) k   \big)$, which gives us a lattice $\Lambda$ in $\HH^0 \times\HH^0 \cong \RR^6$. Then, we can extend $(\ref{action})$ to an action on $\Lambda$ by 
\begin{equation}\label{action2}
\varphi_{\varepsilon} (D_\ell, (b, \sigma(b))) := (F_{D_\ell}(b), F_{\sigma(D_\ell)} (\sigma(b)))
\end{equation}
and define the semi-direct product
\begin{equation}\label{semipro}
S_\varepsilon (\Im \Ord , \U_{\varepsilon} (\Ord)) = \Lambda \rtimes _{\varphi_{\varepsilon}} \U_{\varepsilon} (\Ord) \cong \Im \Ord \rtimes 
\U_{\varepsilon} (\Ord) \cong \T_{\Im \Ord} \rtimes \U_{\varepsilon} (\Ord) \cong \ZZ^6 \rtimes \ZZ.
\end{equation}

As in the classical case, due to $\varepsilon^2 \in \ZZ_{K+}^\times$, we can identified $\U_{\varepsilon} (\Ord)$ with an additive subgroup of rank 1 in $\RR$. This allows us to extend (\ref{action2}) to an action of $\RR$ on $\RR^6$ and to embed $S_\varepsilon (\Im \Ord , \U_{\varepsilon} (\Ord))$ as a discrete subgroup in the solvable Lie group $S (\RR^6 , \RR) = \RR^6 \rtimes \RR \cong (\HH^0 \times \HH^0) \rtimes \RR$. Then, as in the previous example, given the exact sequences 
\[
0 \longrightarrow \Lambda \longrightarrow  S_\varepsilon (\Im \Ord , \U_{\varepsilon} (\Ord)) \longrightarrow \U_{\varepsilon} (\Ord) \longrightarrow 0
\]
\[
0 \longrightarrow \RR^2 \longrightarrow  S(\RR^6 , \RR) \longrightarrow \RR \longrightarrow 0
\]
we have a quotient sequence of coset spaces
\[
0 \longrightarrow \mathbb{T}^6 \longrightarrow M_{\varepsilon}(\Ord) \longrightarrow \mathbb{S}^1 \longrightarrow 0,
\]
where $\mathbb{S}^1 = \U_{\varepsilon} (\Ord) \backslash \RR$ and $\mathbb{T}^6 = \Lambda  \backslash \RR^6$, which proves the following result.

\begin{lemma}\label{LemI}
The quotient space 
\[
M_\varepsilon (\Ord) = S_\varepsilon (\Im \Ord , \U_{\varepsilon} (\Ord)) \backslash S(\RR^6 , \RR)
\] 
is a (6,1)-torus bundle with fundamental group $\pi_1(M_\varepsilon(\Ord)) = S_\varepsilon (\Im \Ord , \U_{\varepsilon} (\Ord)) \cong \ZZ^6 \rtimes \ZZ$.
\end{lemma}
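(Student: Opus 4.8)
The plan is to realize $M_\varepsilon(\Ord)$ as a compact solvmanifold and to read off both conclusions from the defining extension $S(\RR^6,\RR)=\RR^6\rtimes\RR$. This group is simply connected (indeed contractible, diffeomorphic to $\RR^7$) and solvable, and $S_\varepsilon(\Im\Ord,\U_\varepsilon(\Ord))$ sits inside it as a discrete subgroup. The left-translation action of a discrete subgroup of a Lie group on that group is automatically free and properly discontinuous, so the quotient $M_\varepsilon(\Ord)$ is a closed aspherical manifold whose universal cover is $S(\RR^6,\RR)$. Covering-space theory then gives $\pi_1(M_\varepsilon(\Ord))\cong S_\varepsilon(\Im\Ord,\U_\varepsilon(\Ord))\cong\ZZ^6\rtimes\ZZ$ at once, settling the fundamental-group assertion.

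The one genuinely arithmetic input, which I would establish first, is that the monodromy $\varphi_\varepsilon$ of (\ref{action2}) is a lattice automorphism of $\Lambda$. Since $\ZZ_K\subseteq K=Z(\B_K)$ is central, multiplication by the totally positive unit $\varepsilon^2$ preserves purity ($\trd(\varepsilon^2 b)=\varepsilon^2\trd(b)=0$) and carries $\Ord$ to $\Ord$, whence $\varepsilon^2\cdot\Im\Ord=\Im\Ord$; applying the Galois twist shows $\varphi_\varepsilon$ acts on $\Lambda\cong\ZZ^6$ through $\GL_6(\ZZ)$. I would also record here that $\Im\Ord$ is a full rank-$3$ $\ZZ_K$-lattice in $\B_K^0$ — every pure $q\in\B_K^0$ has an integer multiple in $\Ord$, so $\Im\Ord$ spans $\B_K^0$ over $K$ — so that under the twist $\Lambda$ is a full-rank lattice in $\HH^0\times\HH^0\cong\RR^6$ and the fibre $\Lambda\backslash\RR^6=\mathbb{T}^6$ is a genuine $6$-torus. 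On the first $\HH^0$ factor $\varphi_\varepsilon$ is scalar multiplication by $\varepsilon^2$ and on the second by $\sigma(\varepsilon)^2=\varepsilon^{-2}$ (using $\varepsilon\,\sigma(\varepsilon)=N_{K/\QQ}(\varepsilon)=\pm1$), so its eigenvalues are $\varepsilon^2$ and $\varepsilon^{-2}$, each of multiplicity $3$; hence $\det\varphi_\varepsilon=1$ and, as $\varepsilon>1$, $\varphi_\varepsilon\in\SL_6(\ZZ)$ is hyperbolic, the Anosov monodromy promised in the introduction.

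For the bundle structure I would use the Lie-group homomorphism $p\colon S(\RR^6,\RR)\to\RR$, $(v,t)\mapsto t$, with kernel $\RR^6$. The subgroup $S_\varepsilon(\Im\Ord,\U_\varepsilon(\Ord))$ projects onto $\U_\varepsilon(\Ord)$, which I identify with the rank-$1$ lattice $2(\log\varepsilon)\ZZ\subseteq\RR$ exactly as in the classical case treated earlier in this section (sending $D_\ell\mapsto\log\varepsilon^{2\ell}=2\ell\log\varepsilon$). Consequently $p$ descends to a submersion $\bar p\colon M_\varepsilon(\Ord)\to\mathbb{S}^1=\U_\varepsilon(\Ord)\backslash\RR$ whose fibre is $\ker(p)$ modulo $\ker(S_\varepsilon\to\U_\varepsilon)=\Lambda$, namely $\Lambda\backslash\RR^6=\mathbb{T}^6$. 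Local triviality is immediate because, as a smooth fibration (forgetting the twisted group law), $p\colon\RR^6\times\RR\to\RR$ is trivial: trivializing $\bar p$ over a fundamental interval for the $\RR$-factor and gluing the two ends by the return map exhibits $M_\varepsilon(\Ord)$ as the mapping torus of $\varphi_\varepsilon\colon\mathbb{T}^6\to\mathbb{T}^6$, i.e.\ as a $(6,1)$-torus bundle, completing the proof.

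The step I expect to demand the most care is not the mapping-torus formalism — which merely repeats, in dimension six, the construction already rehearsed for $M_\epsilon$ — but the compatibility of the three descriptions of the semidirect product in (\ref{semipro}): one must check that the conjugation relation $D_\ell\,T_b\,D_{-\ell}=T_{\varepsilon^{2\ell}b}$ inside $\PSLBG(\Ord)$ corresponds, under the Galois twist, precisely to the linear action $\varphi_\varepsilon$ on $\Lambda$, so that the discrete group embedded in $S(\RR^6,\RR)$ is genuinely the one arising from $\Gamma(\Ord)$ and not merely an abstract isomorph. Once this identification is pinned down, lattice-preservation and the Anosov eigenvalue computation are the only substantive points, and the bundle conclusion follows formally.
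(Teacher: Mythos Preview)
Your proposal is correct and follows essentially the same approach as the paper: the paper derives the lemma directly from the pair of short exact sequences $0\to\Lambda\to S_\varepsilon(\Im\Ord,\U_\varepsilon(\Ord))\to\U_\varepsilon(\Ord)\to0$ and $0\to\RR^6\to S(\RR^6,\RR)\to\RR\to0$, passing to the quotient sequence $0\to\mathbb{T}^6\to M_\varepsilon(\Ord)\to\mathbb{S}^1\to0$, exactly as you do via the projection $p$. Your write-up is more detailed --- you explicitly verify lattice preservation, full rank of $\Im\Ord$, and the Anosov eigenvalue computation --- but these are elaborations of points the paper leaves implicit by invoking the parallel classical construction of $M_\epsilon$ treated just before.
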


As in the classical case, the action (\ref{action}) of $\U_\varepsilon (\Ord)$ on $\Im \Ord$ can be described very explicitly if we know the fundamental unit of the real quadratic field. For example, let $K=\QQ(\sqrt{n})$ and assume that $n \nequiv 1 \mod 4$. In this case, $\ZZ_K = \ZZ[\sqrt{n}]$ and the fundamental unit of $\ZZ[\sqrt{n}]^\times$ is of the form $\varepsilon = x+y\sqrt{n}$, with $x,y \in \ZZ$. As we know, the action of $\U_\varepsilon (\Ord)$ on $\Im \Ord$ is given by the Möbius action of powers of the matrix
	$\begin{pmatrix}
		x+y\sqrt{n} & 0\\
		0 & (x+y\sqrt{n})^{-1}
	\end{pmatrix}$
on elements of the form $b = x_1 i + x_2 j + x_3 k \in \Im \Ord$. Explicitly, as $x+y\sqrt{n}$ is a real number,  we have that
\[
\begin{pmatrix}
		x+y\sqrt{n} & 0\\
		0 & (x+y\sqrt{n})^{-1}
	\end{pmatrix} ( x_1 i + x_2 j + x_3 k) = (x+y\sqrt{n})^2  x_1 i + (x+y\sqrt{n})^2 x_2 j + (x+y\sqrt{n})^2 x_3 k
\]
Then, the description of the action  can be reduced to describe the action of $\begin{pmatrix}
		x+y\sqrt{n} & 0\\
		0 & (x+y\sqrt{n})^{-1}
	\end{pmatrix}$ on the integers $\alpha + \beta \sqrt{n} \in \ZZ[\sqrt{n}]$ as in the classical case. Since such action is given by
\[
   (x+y\sqrt{n})^2(\alpha+\beta\sqrt{n})
	=(x^2+ny^2)\alpha +2yn \beta+(2y\alpha+(x^2+ny^2)\beta)\sqrt{n},
\]	
it can be expressed as the product $\begin{pmatrix}
		x^2+ny^2 & 2yn\\
		2y & x^2+ny^2
	\end{pmatrix} \begin{pmatrix}
			\alpha \\
			\beta
		\end{pmatrix}$. Thus, we can represent explicitly the action of $\begin{pmatrix}
		x+y\sqrt{n} & 0\\
		0 & (x+y\sqrt{n})^{-1}
	\end{pmatrix}$
on $b = x_1 i + x_2 j + x_3 k \in \Im \Ord$ as the product
\begin{equation}\label{action3}
\begin{pmatrix}
		(x^2+ny^2) & 2yn & 0 & 0 & 0 & 0\\
		2y & (x^2+ny^2) & 0 & 0 & 0 & 0\\
		0 & 0 & (x^2+ny^2) & 2yn & 0 & 0\\
		0 & 0 & 2y & (x^2+ny^2) & 0 & 0\\
		0 & 0 & 0 & 0 & (x^2+ny^2) & 2yn\\
		0 & 0 & 0 & 0 & 2y & (x^2+ny^2)
	\end{pmatrix}
	\begin{pmatrix}
			\alpha_1 \\
			\beta_1 \\
			\alpha_2 \\
			\beta_2 \\
			\alpha_3 \\
			\beta_3
		\end{pmatrix}
\end{equation}
via the identification of $\Im \Ord$ with $\ZZ^6$ given by
\[
	x_1 i + x_2 j + x_3 k = 	( \alpha_1+\beta_1 \sqrt n)i +  (\alpha_2+\beta_2 \sqrt n)j +  (\alpha_3+\beta_3 \sqrt n)k
	\leftrightarrow (\alpha_1, \beta_1, \alpha_2, \beta_2, \alpha_3, \beta_3)^\intercal.
\]		
	
As we will see in the following two examples, when $n \equiv 1 \mod 4$, there is not a general formula as (\ref{action3}) but we can compute the action explicitly case by case. 

\begin{example}
Let $K = \QQ(\sqrt{5})$ with ring of integers $\ZZ_K = \ZZ[\theta] = \ZZ[\frac{1+\sqrt{5}}{2}]$. As in the previous case, the description of the action of $\U_\varepsilon (\Ord)$ on $\Im \Ord$ can be reduced to describe the Möbius action of $\begin{pmatrix}
		\varepsilon & 0\\
		0 & \varepsilon^{-1}
	\end{pmatrix} =
	\begin{pmatrix}
		\frac{1+\sqrt{5}}{2} & 0\\
		0 & \frac{-1+\sqrt{5}}{2}
	\end{pmatrix}$
on the integers $\alpha+\beta \theta = \alpha+\beta \left( \frac{1+\sqrt{5}}{2} \right) \in \ZZ[\frac{1+\sqrt{5}}{2}]$.  Then, we have 
\begin{align*}
 \varepsilon^2 (\alpha  + \beta \theta ) &= \left( \frac{3+\sqrt{5}}{2} \right)\alpha + \left( \frac{3+\sqrt{5}}{2} \right) \beta \theta   \\
&= \alpha + \left( \frac{1+\sqrt{5}}{2} \right) \alpha + \beta \theta + \left( \frac{1+\sqrt{5}}{2} \right) \beta \theta \\
& = \alpha + \left( \frac{1+\sqrt{5}}{2} \right) \alpha +  \left( \frac{1+\sqrt{5}}{2} \right) \beta + \left( \frac{3 +\sqrt{5}}{2} \right) \beta \\
& = \alpha + \left( \frac{1+\sqrt{5}}{2} \right) \alpha +  \beta + 2\left( \frac{1+\sqrt{5}}{2} \right) \beta \\
&=  (\alpha+ \beta) +  (\alpha+2\beta) \left( \frac{1+\sqrt{5}}{2} \right) ,
\end{align*}
which produce the matrix $A=\begin{pmatrix}
		1 & 1\\
		1 & 2
	\end{pmatrix}$ and we obtain the explicit action of 
	$\begin{pmatrix}
		\frac{1+\sqrt{5}}{2} & 0\\
		0 & \frac{-1+\sqrt{5}}{2}
	\end{pmatrix}$
	on an element of the form $( \alpha_1+\beta_1 \left( \frac{1+\sqrt{5}}{2} \right))i +  (\alpha_2+\beta_2 \left( \frac{1+\sqrt{5}}{2} \right))j +  (\alpha_3+\beta_3 \left( \frac{1+\sqrt{5}}{2} \right) )k
 \in \Im \Ord$ as the product
\[
\begin{pmatrix}
		1 & 1 & 0 & 0 & 0 & 0\\
		1 & 2 & 0 & 0 & 0 & 0\\
		0 & 0 & 1 & 1 & 0 & 0\\
		0 & 0 & 1 & 2 & 0 & 0\\
		0 & 0 & 0 & 0 & 1 & 1\\
		0 & 0 & 0 & 0 & 1 & 2
	\end{pmatrix}
	\begin{pmatrix}
			\alpha_1 \\
			\beta_1 \\
			\alpha_2 \\
			\beta_2 \\
			\alpha_3 \\
			\beta_3
		\end{pmatrix}
\]
\end{example}

\begin{example}
Let $K=\QQ(\sqrt{13})$ with ring of integers $\ZZ_K = \ZZ[\theta] =  \ZZ[\frac{1+\sqrt{13}}{2}]$. The Möbius action of
	$\begin{pmatrix}
		\varepsilon & 0\\
		0 & \varepsilon^{-1}
	\end{pmatrix} =
	\begin{pmatrix}
		\frac{3+\sqrt{13}}{2} & 0\\
		0 & \frac{-3+\sqrt{13}}{2}
	\end{pmatrix}$
on an integer $\alpha+\beta \theta = \alpha +\beta \left( \frac{1+\sqrt{13}}{2} \right) \in  \ZZ[\frac{1+\sqrt{13}}{2}]$ is given by
\begin{align*}
\varepsilon^2 (\alpha + \beta \theta ) & = \varepsilon^2 \alpha + \varepsilon^2 \beta \theta \\
    & = \left( \frac{11+3 \sqrt{13}}{2} \right) \alpha + \left( \frac{11+3\sqrt{13}}{2} \right) \beta \theta  \\
    & = \alpha (4 + 3\theta) + \beta \theta (4+3\theta) \\
    & = 4 \alpha + 3 \alpha \theta + 4 \beta \theta + 3 \beta \theta^2 \\
    & =  4 \alpha + 3 \alpha \theta + 4 \beta \theta + 3 \beta \left( \frac{7 + \sqrt{13}}{2} \right) \\
    & = 4 \alpha + 3 \alpha \theta + 4 \beta \theta + 3 \beta (3+ \theta) \\
    & = (4 \alpha + 9 \beta ) + (3 \alpha + 7 \beta )\theta ,
\end{align*}
which produce the explicit action of the matrix 
$A = \begin{pmatrix}
		\frac{3+\sqrt{13}}{2} & 0\\
		0 & \frac{-3+\sqrt{13}}{2}
	\end{pmatrix}$
	on $( \alpha_1+\beta_1 \left( \frac{1+\sqrt{13}}{2} \right))i +  (\alpha_2+\beta_2 \left( \frac{1+\sqrt{13}}{2} \right))j +  (\alpha_3+\beta_3 \left( \frac{1+\sqrt{13}}{2} \right) )k
 \in \Im \Ord$ given by the product
\[
\begin{pmatrix}
		4 & 9 & 0 & 0 & 0 & 0\\
		3 & 7 & 0 & 0 & 0 & 0\\
		0 & 0 & 4 & 9 & 0 & 0\\
		0 & 0 & 3 & 7 & 0 & 0\\
		0 & 0 & 0 & 0 & 4 & 9\\
		0 & 0 & 0 & 0 & 3 & 7
	\end{pmatrix}
	\begin{pmatrix}
			\alpha_1 \\
			\beta_1 \\
			\alpha_2 \\
			\beta_2 \\
			\alpha_3 \\
			\beta_3
		\end{pmatrix}.
\]
\end{example}

\emph{Proof of Theorem \ref{Teorema}.}
As we see in the first section 
\[
\Stab_{\M_{\Hi_\HH^1}} (\infty) = \mathcal{A}(\HH) \cong \RR^3 \rtimes (\RR^+ \times O(3)). 
\]
Then, the set of translations in $\Gamma(\Ord)$ fixing $\infty$ is given by $\T_{\Im \Ord} \cong \Im \Ord$, the set of homotheties in $\Gamma(\Ord)$ is given by $\U_\varepsilon(\Ord) \cong \ZZ^\times_{K+}$ and the set of rotations in $\Gamma(\Ord)$ fixing 0 and $\infty$ is given by $\U^1(\Ord) \cong \Ord^1$. Hence,
\[
\Stab_{\Gamma(\Ord)} (\infty) = \mathcal{A}(\Ord) := \T_{\Im \Ord} \rtimes (\U_\varepsilon(\Ord) \times \U^1(\Ord)) \cong \Im \Ord \rtimes (\ZZ_{K+}^\times \times \Ord^1). 
\]
and we can define the orbifold $M_{\A(\Ord)} : = \A(\Ord) \backslash \Hi^2_{\HH}$, which coincide with $M_{\Gamma(\Ord)}$  in a small neighborhood of $(\infty, \infty)$. 

On the other hand, considering the subgroup $\Delta (\Ord) :=  \T_{\Im \Ord} \rtimes \U_\varepsilon(\Ord) \cong \Im \Ord \rtimes \ZZ_{K+}^\times$, we can define $M_{\Delta(\Ord)} : = \Delta(\Ord) \backslash \Hi^2_{\HH}$. Observe that,  $M_{\Delta(\Ord)}$ can be identified with $M_\varepsilon (\Ord) \times \RR^+$ where $M_\varepsilon (\Ord)$ is the (6,1)-torus bundle defined in Lemma \ref{LemI}. Thus, as $[\mathcal{A}(\Ord): \U^1(\Ord)]$ is finite, we have that $M_\varepsilon (\Ord)$ gives a finite covering of the cross-section of the cusp at $\infty$ of $M_{\A(\Ord)}$, which proves the result.   \qed

\begin{remark}
Contrary to the orbifolds described in \cite{DiazVerjovskyVlacci2015} a complete description of the Hilbert-Blumenthal quaternion orbifolds is still far from our reach. The main reason is the complicated action of units of $\Ord^\times$ in $\Hi_{\HH}^2$. On one hand, the finite torsion unitary subgroup $\U^1(\Ord)$ acts by rotations as in the case of classical Bianchi modular orbifolds \cite[\S 7.3]{ElstrotGrunewaldMennicke1998} or in the case of Lipschitz and Hurwitz modular orbifolds studied in \cite{DiazVerjovskyVlacci2015}. On the other hand, the scalar unitary subgroup $\U_\varepsilon(\Ord)$ acts by homotheties as in the case of classical Hilbert-Blumenthal surfaces \cite{QuinnVerjovsky2020} \cite{Vandergeer2012}. Then, we find in Hilbert-Blumenthal quaternion orbifolds a kind of mixture of classical Bianchi orbifolds and Hilbert-Blumenthal surfaces. Therefore, a complete description of $M_{\Gamma(\Ord)}$ should include dealing with a mixture of the usual complications coming from both types of geometric objects.
\end{remark}



\end{document}